\tikzset{
    longbrace/.style = {
        decorate, decoration = {brace, amplitude=10pt}
    },
}
\numberwithin{equation}{section}
\theoremstyle{plain}
\newtheorem{definition}{Definition}[section]
\newtheorem{proposition}[definition]{Proposition}
\newtheorem{lemma}[definition]{Lemma}
\newtheorem{remark}[definition]{Remark}
\newtheorem{example}[definition]{Example}
\newtheorem*{notations}{Notations}
\newtheorem*{acknowledgments}{Acknowledgments}
\theoremstyle{plain}
\newtheorem*{proof}{\textit{Proof}}
\newcommand{\defeq}{\coloneqq}
\newcommand{\eqdef}{\eqqcolon}
\newcommand{\isom}{\cong}
\newcommand{\tensor}{\otimes}
\DeclareMathOperator{\Kernel}{Ker}
\DeclareMathOperator{\Image}{Im}
\DeclareMathOperator{\Der}{Der}
\DeclareMathOperator{\Aut}{Aut}
\DeclareMathOperator{\Ber}{Ber}
\newcommand{\Real}{\mathbb{R}}
\newcommand{\Complex}{\mathbb{C}}
\newcommand{\ZInteger}{\mathbb{Z}}
\newcommand{\Korper}{\mathbb{K}}
\newcommand{\gee}{\mathfrak{g}}
\newcommand{\abs}[1]{ { \left| #1 \right| } }
\newcommand{\setmid}{ \ \middle| \ }
\newcommand{\setin}[2]{{\left\{ {#1} \setmid {#2} \right\}}}
\newcommand{\restr}[2]{{\left. {#1} \right|_{#2}}}
\DeclareMathOperator{\vol}{vol}
\DeclareMathOperator{\Vect}{Vect}
\DeclareMathOperator{\GL}{GL}
\DeclareMathOperator{\trace}{tr}
\DeclareMathOperator{\rhodet}{\rho\det}
\DeclareMathOperator{\rhotr}{\rho\trace}
\DeclareMathOperator{\rhoBer}{\rho\Ber}
\DeclareMathOperator{\Diverg}{Div}
\DeclareMathOperator{\Modular}{Mod}
\newcommand{\func}{\mathscr{O}}
\newcommand{\rhofunc}[1][\rho]{\mathscr{O}_{#1}}
\newcommand{\dbrack}[1]{{\llbracket {#1} \rrbracket}}
\newcommand{\even}{\mathrm{even}}
\newcommand{\odd}{\mathrm{odd}}
\newcommand{\ppx}[2][]{\frac{\partial {#1}}{\partial {#2}}}
\newcommand{\transpose}[1]{ {{#1}^{\top}} }
\newcommand{\defnamelessmap}[5][]{%
    {#1} %
    \begin{tikzcd}[ampersand replacement=\&,row sep=0pt,baseline=(S.base)]%
        |[alias=S]|{#2} \arrow[r, rightarrow] \& {#3} \\%
        {#4} \arrow[r, mapsto] \& {#5}%
    \end{tikzcd}%
}
\title{Almost Commutative Manifolds and Their Modular Classes}
\author{Shuichi Harako}
\date{}
\begin{document}
\maketitle
\begin{abstract}
    An almost commutative algebra, or a \(\rho\)-commutative algebra, 
        is an algebra which is graded by an abelian group 
        and whose commutativity is controlled by a function 
        called a commutation factor. 
    The same way as a formulation of a supermanifold as a ringed space, 
        we introduce concepts of the \(\rho\)-commutative versions of 
        manifolds, Q-manifolds, Berezin volume forms, and the modular classes. 
    They are generalizations of the ones in supergeometry. 
    We give examples including 
        a \(\rho\)-commutative version of the Schouten bracket 
        and a noncommutative torus. 
\end{abstract}

\section{Introduction}
For the study of noncommutative algebras, 
it is convenient to impose a constraint for the noncommutativity of the algebras, 
which is often called a commutation rule. 
A commutative algebra is an algebra with a commutation rule \(fg=gf\) 
for all element \(f, g\) of the algebra. 
A superalgebra is a \(\ZInteger / 2\ZInteger\)-graded algebra which has a commutation rule 
\(fg=-gf\) if homogeneous elements \(f, g\) are odd, and \(fg=gf\) otherwise.
A \(\rho\)-commutative (or almost commutative, \(\epsilon\)-commutative) algebra 
is an algebra graded by an arbitrary abelian group \(G\) 
with its commutation rule controlled by 
a map called a commutation factor \cite{Scheunert1979,Bongaarts1994,Ciupala2005,Bruce2020}. 
This condition characterizing noncommutativity covers not only commutative algebras or superalgebras, 
but quaternions, quantum planes, and noncommutative tori, etc. 
\(\rho\)-Lie algebras are also defined as the \(\rho\)-commutative version of Lie algebras. 
\par Meanwhile, we have a characteristic class called the modular class of a Q-manifold. 
A Q-manifold or a dg-manifold is a supermanifold with an odd vector field squared to zero 
    as a derivation on functions. 
This condition for a vector field appears in various cases. 
Lie algebroids, \(L_\infty\)-algebras, and the de Rham or Dolbeault complexes 
    are formulated as Q-manifolds \cite{Kontsevich1999}. 
In the aspect of mathematical physics, the classical BRST formalism 
    is one of the application of Q-manifolds \cite{Mnev2019}. 
Theories of characteristic classes of Q-manifolds are studied by Kotov \cite{Kotov2007}, 
Lyakhovich-Mosman-Sharapov \cite{Lyakhovich2009}, and Bruce \cite{Bruce2017}. 
The modular class is one of them which generalizes the modular class of 
    a higher Poisson manifold. 
\par The main goal of this paper is to introduce a concept of 
the modular classes for \(\rho\)-Q-manifolds 
and give examples of them. 
Supermanifolds are constructed by replacing the local functional algebras of the underlying manifold 
    with a graded-commutative \(\ZInteger / 2\ZInteger\)-algebra. 
We apply the same procedure to a manifold but with a \(\rho\)-commutative algebra. 
We call the resulting manifold a \(\rho\)-manifold. 
Vector fields on a \(\rho\)-manifold are \(\rho\)-derivations on 
    the \(\rho\)-commutative functional algebra. 
An odd vector field squared to zero in this case defines a \(\rho\)-Q-manifold. 
The \(\rho\)-commutative version of a Berezin volume form is defined, which 
    allows us to define the modular class of a \(\rho\)-Q-manifold 
    the same way as a Q-manifold. 
We give examples of \(\rho\)-Q-manifolds and their modular classes, e.g., 
    the tangent or cotangent bundle of a \(\rho\)-manifold, 
    the degree shift of them, and the BRST differential on noncommutative tori. 
\par In Section 2, we review the theory of \(\rho\)-commutative algebras 
    and \(\rho\)-Lie algebras. We mainly give definitions and examples of them here. 
In Section 3, we review the matrix algebra whose entries are in a \(\rho\)-commutative algebra. 
We introduce the determinant and the Berezinian in this version 
    by \cite{KobayashiNagamachi1984,Covolo2016}. 
In Section 4, we discuss the specific algebra which is a subalgebra of 
    the set of formal power series with \(G\)-graded indeterminates. 
    This section is devoted to the preparation for argument of \(\rho\)-manifolds. 
In Section 5, we define a \(\rho\)-manifold and a \(\rho\)-Q-manifold. 
Here we discuss the de Rham complex and the degree-\(i\) Schouten bracket 
    of a \(\rho\)-manifold. 
In Section 6, we introduce the Berezinian bundle of a \(\rho\)-manifold and its orientability 
    using the result of Section 3. 
In section 7, we define the modular class of a \(\rho\)-Q-manifold 
    and calculate them in several situations, for example, 
    the degree-\(i\) Schouten bracket and noncommutative tori.  

\begin{notations}
    We write \(\Korper = \Real\) or \(\Complex\) for a coefficient field. 
    The underlying manifold \(M\) can be a smooth or real analytic or holomorphic manifold. 
    The term `manifold' in this paper means either of them unless otherwise specified. 
    \(\func(M)\) means the algebra of real-valued smooth functions \(C^\infty (M, \Real)\) 
        or complex-valued smooth functions \(C^\infty (M, \Complex)\) if \(M\) is smooth, 
        real-analytic functions \(C^\omega (M, \Real)\) if \(M\) is real-analytic, 
        holomorphic functions \(C^\omega (M, \Complex)\) if \(M\) is holomorphic. 
    For \(\Korper = \Real\) and \(\func(M)\) being the set of 
        a real-valued smooth or real-analytic functions, 
        we call this case the \textit{real category}. 
    For \(\Korper = \Complex\) and \(\func(M)\) being the set of 
        a complex-valued smooth or holomorphic functions, 
        we call this case the \textit{complex category}. 
\end{notations}

\begin{acknowledgments}
    We are very grateful to our supervisor Takuya Sakasai and our colleagues in laboratory 
        for useful advice and comments. 
    \par This work was supported by JSPS KAKENHI Grant Number 22J13678. 
\end{acknowledgments}

\section{\(\rho\)-commutative algebras and \(\rho\)-Lie algebras}
In this section, we briefly review the theory of \(\rho\)-commutative algebras 
and \(\rho\)-Lie algebras.
\begin{definition}
    Let \(G\) be an abelian group. 
    A \textit{commutation factor on \(G\)} is a map \(\rho \colon G \times G \to \Korper\) 
    satisfying the following conditions: 
    \begin{itemize}
        \item[(i)] \(\rho(i, j) \rho(j, i) = 1\) for all \(i, j \in G\), 
        \item[(ii)] \(\rho(i+j, k) =  \rho(i, k) \rho (j, k)\) for all \(i,j,k \in G\).  
    \end{itemize}
\end{definition} 
Directly from this definition, it is easily shown that 
\begin{gather*}
    \rho(i, j) \neq 0, \qquad \rho(i, j) = \rho(j, i)^{-1} = \rho (-j, i) = \rho (j, -i), \qquad 
    \rho(i, i) = \pm 1, \\
    \rho(0, i) = \rho(i, 0) = 1, \qquad \rho(i, j+k) =  \rho(i, j) \rho (i, k)
\end{gather*}
for all \(i, j, k \in G\). 
\par Let \(G\) be an abelian group and 
\(A = \bigoplus_{i \in G} A_i\) be a \(G\)-graded algebra over \(\Korper\). 
We write \(\abs{f} \defeq i\) if and only if \(f \in A_i\). 
Unless otherwise mentioned, we assume \(f\) is a \(G\)-homogeneous element when we consider the degree \(\abs{f}\).
\begin{definition}
    A \(G\)-graded algebra \(A\) is said to be a \textit{\(\rho\)-commutative algebra} if 
    \begin{equation*}
        fg = \rho(\abs{f}, \abs{g})gf
    \end{equation*}
    holds for all \(f,g \in A\). 
\end{definition}

\begin{example}
    For any abelian group \(G\) and for the trivial commutation factor 
    \(\rho \colon G \times G \to \{1\} \hookrightarrow \Korper\), 
    a \(\rho\)-commutative algebra is simply an usual commutative algebra 
        with an extra \(G\)-grading. 
\end{example}

\begin{example}\label{example_supercommutative_algebra}
    Let \(G = \ZInteger / 2\ZInteger\) and define  
    \(\rho \colon \ZInteger / 2\ZInteger \times \ZInteger / 2\ZInteger \to \Korper\) by 
    \begin{equation*}
        \rho(i, j) \defeq (-1)^{ij} 
    \end{equation*} 
    for all \(i, j \in G\).
    In this case, a \(\rho\)-commutative algebra coincides with a superalgebra. 
\end{example}

\begin{example}\label{example_noncommutative_torus_algebra}
    Fix an integer \(m > 0\) and a skew-symmetric matrix \(\Theta = (\theta_{kl})_{k,l} \in M(m, \Real)\). 
    Let \(\Korper = \Complex\) and \(G = \ZInteger^m\). 
    Write an element of \(G\) like \(\mathbf{i} = (i_1, i_2, \dots, i_m) \in G\). 
    Define a map \(\rho \colon G \times G \to \Complex\) by 
    \begin{equation*}
        \rho(\mathbf{i}, \mathbf{j}) \defeq \exp(2\pi\sqrt{-1} \ \transpose{\mathbf{i}} \Theta \mathbf{j})
    \end{equation*}
    for all \(\mathbf{i}, \mathbf{j} \in G\). 
    This map becomes a commutation factor on \(\ZInteger^m\) because the skew-symmetricity of \(\Theta\) implies that 
    the equations 
    \begin{align*}
        \rho(\mathbf{i}, \mathbf{j}) \rho(\mathbf{j}, \mathbf{i}) 
        &{}= \exp(2\pi\sqrt{-1} \ (\transpose{\mathbf{i}} \Theta \mathbf{j} + \transpose{\mathbf{j}} \Theta \mathbf{i})) \\
        &{}= \exp(2\pi\sqrt{-1} \ (\transpose{\mathbf{i}} \Theta \mathbf{j} - \transpose{\mathbf{i}} \Theta \mathbf{j}))
        = 1
    \end{align*}
    and 
    \begin{align*}
        \rho(\mathbf{i}+\mathbf{j}, \mathbf{k}) 
        &{}= \exp(2\pi\sqrt{-1} \ (\transpose{(\mathbf{i} + \mathbf{j})} \Theta \mathbf{k})) \\
        &{}= \exp(2\pi\sqrt{-1} \ (\transpose{\mathbf{i}} \Theta \mathbf{k} + \transpose{\mathbf{j}} \Theta \mathbf{k}))
        = \rho(\mathbf{i}, \mathbf{k}) \rho(\mathbf{j}, \mathbf{k})
    \end{align*}
    hold for all \(\mathbf{i}, \mathbf{j}, \mathbf{k} \in G\). 
    \par Let \(A_\Theta\) be an algebra generated by indeterminates \(u^1, u^2, \dots, u^m\) with relations
    \begin{equation*}
        u^k u^l = \exp(2\pi\sqrt{-1} \ \theta_{kl}) u^l u^k
    \end{equation*}
    for \(1 \leq k, l \leq m\). 
    This algebra has a natural \(G\)-grading 
    \(A_\Theta = \bigoplus_{\mathbf{i} \in G} (A_\Theta)_{\mathbf{i}}\)
    where 
    \begin{equation*}
        (A_\Theta)_{\mathbf{i}} \defeq 
        \begin{cases}
            \Complex \cdot \{(u^1)^{i_1} (u^2)^{i_2} \cdots (u^m)^{i_m}\} 
                & \text{if } i_k \geq 0 \text{ for all } 1 \leq k \leq m, \\ 
            0 & \text{otherwise}.
        \end{cases}
    \end{equation*}
    Then, \(A_\Theta\) becomes a \(\rho\)-commutative algebra over \(\Complex\) 
    with respect to the commutation factor \(\rho\) defined above. 
    Moreover, \(A_\Theta\) is nothing but the algebra of functions on 
    the \(m\)-dimensional noncommutative torus \cite{Rieffel1990}. 
    The case \(m=2\) is discussed in \cite{Bruce2020}. 
\end{example}

\begin{remark}
    When a \(G\)-graded \(\rho\)-commutative algebra is given, 
    we often consider another algebra associated with it 
    which is graded by \(G^\prime \defeq \ZInteger \times G\), 
    and whose commutation factor \(\rho^\prime \colon G^\prime \times G^\prime \to \Korper\) is defined by 
    \begin{equation*}
        \rho^\prime ((s, i), (t, j)) \defeq (-1)^{st} \rho(i, j)
    \end{equation*}
    for \((s, i), (t, j) \in G^\prime\). 
    In this case, the \(G^\prime\)-degree on a \(\rho^\prime\)-algebra is written by 
        \(\abs{\cdot}^\prime\). 
    We use these notations from now on. 
\end{remark} 

\begin{example}
    Suppose \(V = \bigoplus_{i \in G} V_i\) is a \(G\)-graded vector space 
    and \(\rho\) is a commutation factor on \(G\). 
    The \textit{\(\rho\)-symmetric algebra} 
    \(S_\rho^\bullet V \defeq \bigoplus_{k \geq 0} S_\rho^k V\) \textit{on \(V\)} is defined by 
    \begin{equation*}
        S_\rho^k V \defeq V^{\tensor k} / I 
    \end{equation*}
    for \(k \geq 0\) where \(I\) is the two-sided ideal of \(V^{\tensor k}\) 
    generated by 
    \begin{equation*}
        \setin{v \tensor w-\rho(\abs{v},\abs{w})w \tensor v}{v,w \in V}.
    \end{equation*}
    This algebra is clearly a \(\rho\)-commutative algebra. 
    \par On the other hand, the \textit{\(\rho\)-antisymmetric algebra} 
    \(\wedge_\rho^\bullet V \defeq \bigoplus_{k \geq 0} \wedge_\rho^k V\) \textit{on \(V\)}, 
    which is defined by 
    \begin{equation*}
        \wedge_\rho^k V \defeq V^{\tensor k} / J 
    \end{equation*}
    for \(k \geq 0\) where \(J\) is the ideal 
    generated by 
    \begin{equation*}
        \setin{v \tensor w + \rho(\abs{v},\abs{w})w \tensor v}{v,w \in V},
    \end{equation*}
    is not \(\rho\)-commutative. 
    However, it is regarded as a \(G^\prime\)-graded \(\rho^\prime\)-commutative algebra 
    if we consider a \(G^\prime\)-grading 
    \(\abs{\omega}^\prime \defeq (k, \abs{\omega})\) for \(\omega \in \wedge_\rho^k V\). 
\end{example}
\begin{example}
    Let \(R\) be a usual commutative algebra over \(\Korper\) 
        and \(x^1, \allowbreak \dots, \allowbreak x^m\) be \(G\)-graded indeterminates. 
    A well-known construction of formal power series 
    \begin{equation*}
        R \dbrack{x^1, \dots, x^m}
        \defeq \setin{\sum_{\mathbf{w} \in \ZInteger_{\geq 0}^{\times m}}
            f_{\mathbf{w}}x^{\mathbf{w}}}{
                f_{\mathbf{w}} \in R \text{ for every } \mathbf{w}}, 
    \end{equation*}
    where \(x^{\mathbf{w}}\) is a multi-index notation 
    \(x^{\mathbf{w}} \defeq (x^1)^{w_1} (x^2)^{w_2} \cdots (x^m)^{w_m}\) 
    for \(\mathbf{w} = (w_1, w_2, \dots, w_m) \in \ZInteger_{\geq 0}^{\times m}\), 
    does not provide a \(\rho\)-commutative algebra in general. 
    It is a direct product of \(G\)-homogeneous subspaces 
    \begin{equation*}
        R \dbrack{x^1, \dots, x^m} = \prod_{i \in G} R \dbrack{x^1, \dots, x^m}_i, 
    \end{equation*}
    not a direct sum if \(G\) is an infinite group. Here, for every \(i \in G\), 
    \begin{align*}
        &R \dbrack{x^1, \dots, x^m}_i \\
        \defeq{}& \setin{\sum_{\mathbf{w} \in \ZInteger_{\geq 0}^{\times m}}
            f_{\mathbf{w}}x^{\mathbf{w}} \in R \dbrack{x^1, \dots, x^m}}{
            |x^{\mathbf{w}}| = i \text{ for every } \mathbf{w}}.
    \end{align*}
    For this reason, we consider a direct sum 
    \begin{equation*}
        R \dbrack{x^1, \dots, x^m}_\bullet \defeq \bigoplus_{i \in G} R \dbrack{x^1, \dots, x^m}_i 
    \end{equation*}
    rather than a direct product. 
    This subspace \(R \dbrack{x^1, \dots, x^m}_\bullet\) of \(R \dbrack{x^1, \dots, x^m}\) 
        is certainly a \(\rho\)-commutative algebra. 
\end{example}
\begin{definition}
    Let \(G\) be an abelian group and \(\rho \colon G \times G \to \Korper\) be a commutation factor. 
    A \(G\)-graded vector space \(\gee = \bigoplus_{i \in G} \gee_i\) over \(\Korper\) is said to be 
    a \textit{\(\rho\)-Lie algebra of degree \(d \in G\)} if it is equipped with a \(\Korper\)-bilinear map 
    \([-,-]_\rho \colon \gee \times \gee \to \gee\) satisfying 
    \begin{itemize}
        \item[(i)] \(\abs{\gee} \defeq \abs{[-,-]_\rho} = d\), that is, \(\abs{[f, g]_\rho} = \abs{f} + \abs{g} + d\), 
        \item[(ii)] \([f, g]_\rho = -\rho(\abs{f}, \abs{g}) [g, f]_\rho\) \quad (\(\rho\)-antisymmetricity), 
        \item[(iii)] \([f,[g,h]]_\rho = [[f,g],h]_\rho + \rho(\abs{f}+d,\abs{g}+d)[g,[f,h]]_\rho\)
            \quad (\(\rho\)-Jacobi identity)
    \end{itemize}
    for all \(f, g, h \in \gee\). 
    \par In particular, a \(\rho\)-Lie algebra \(\gee\) of degree \(d\) is said to be 
    a \textit{\(\rho\)-Lie antialgebra} if \(\rho(d, d) = -1\). 
\end{definition}

A \(\rho\)-Lie algebra is originally introduced by Scheunert \cite{Scheunert1979} as an ``\(\epsilon\)Lie algebra'' 
and also known as a Lie colored algebra \cite{Ritcher2001}. 
A \(\rho\)-commutative algebra also has another name as 
an \(\epsilon\)-commutative algebra \cite{Scheunert1979}, 
a colored algebra \cite{Rittenberg1978}, or 
an almost commutative algebra \cite{Bongaarts1994,Bruce2020}, etc. 

\begin{example}
    Any \(G\)-graded algebra \(A\) with a commutation factor \(\rho\) on \(G\)
    becomes a \(\rho\)-Lie algebra of degree 0 by the \textit{\(\rho\)-commutator} on \(A\), 
    which is a \(\Korper\)-bilinear map \([-,-]_\rho \colon A \times A \to A\) defined by 
    \begin{equation*}
        [f, g]_\rho \defeq fg - \rho(\abs{f},\abs{g}) gf
    \end{equation*}
    for \(f, g \in A\).
\end{example}

\begin{example}
    Let \(A = \bigoplus_{i \in G} A_i\) be a \(\rho\)-commutative algebra. 
    A homomorphism \(X \colon A \to A\) of degree \(\abs{X} \in G\) 
    is called a \textit{\(\rho\)-derivation} if 
    \begin{equation*}
        X(fg) = (Xf)g + \rho(\abs{X}, \abs{f})f(Xg)
    \end{equation*}
    holds for all \(f, g \in A\). 
    The vector space consisting of all \(\rho\)-derivations on \(A\) is denoted by \(\rho\Der(A)\), which has 
    a natural \(G\)-grading \(\rho\Der(A) = \bigoplus_{i \in G} \rho\Der(A)_i\) where 
    \begin{equation*}
        \rho\Der(A)_i \defeq \setin{X \in \rho\Der(A)}{ \abs{X} = i}. 
    \end{equation*}
    The space \(\rho\Der(A)\) of \(\rho\)-derivations becomes a \(\rho\)-Lie algebra of degree 0 by 
    the \(\rho\)-commutator 
    \begin{equation*}
        [X, Y]_\rho(f) = X(Yf) - \rho(\abs{X}, \abs{Y})Y(Xf). 
    \end{equation*}
    Moreover, it has a structure of a well-defined left \(A\)-module  
    \begin{equation*}
        (fX)(g) \defeq f(Xg) \qquad (X \in \rho\Der(A), \ f,g \in A)
    \end{equation*}
    by the \(\rho\)-commutativity of \(A\). 
\end{example}

Here we introduce a \(\rho\)-commutative version of a differential graded algebra or a Q-algebra \cite{Bruce2020}. 
\begin{definition}
    A pair \((A, Q)\) is called a \textit{\(\rho\)-commutative Q-algebra} 
        (or \textit{\(\rho\)-Q-algebra} for short) if 
    \begin{itemize}
        \item[(i)] \(A\) is a \(\rho\)-commutative algebra, and \(Q \in \rho\Der(A)\), 
        \item[(ii)] \(\rho(\abs{Q}, \abs{Q})=-1\), 
        \item[(iii)] \([Q,Q]_\rho(=2Q \circ Q)=0\).  
    \end{itemize}
    This derivation \(Q\) is called a \textit{homological \(\rho\)-derivation}. 
\end{definition}
\begin{example}\label{example_trivial_rho_Q_algebra}
    For any \(\rho\)-commutative algebra \(A\), the pair \((A, 0)\) is a \(\rho^\prime\)-Q-algebra. 
    Note that we cannot simply say that \(\rho(|Q|,|Q|) = -1\) 
        because \(G\) might not admit a commutation factor \(\rho\)
        with \(\rho(i, i) = -1\) for some \(i \in G\). 
    For example, any commutation factor on \(\ZInteger / 3\ZInteger\) is trivial. 
\end{example}
\begin{example}
    Let \(\gee\) be an \(n\)-dimensional \(G\)-graded \(\rho\)-Lie algebra over \(\Korper\). 
    Take a basis \((e_1, e_2, \dots, e_m)\) of \(\gee\) and 
    denote the structure constant by \(\{\gamma_{ab}^c\}_{a,b,c}\), i.e. it satisfies 
    \begin{equation*}
        [e_a, e_b]_\rho = \sum_{c=1}^m \gamma_{ab}^c e_c
    \end{equation*}
    for all \(1 \leq a, b \leq m\). 
    Set \(\Pi \gee^\ast \defeq \gee^\ast\) as a set, with a \(G^\prime\)-grading 
    \(\abs{\xi}^\prime \defeq (1, \abs{\xi})\) for \(\xi \in \Pi \gee^\ast\) 
    where the \(G\)-grading on the vector space \(\gee^\ast\) is a natural one induced by 
    the \(G\)-grading \(\abs{\cdot}\) on \(\gee\). 
    \par Then, the algebra \(S_{\rho^\prime}^\bullet (\Pi \gee^\ast)\) has 
    a homological \(\rho^\prime\)-derivation 
    \begin{equation*}
        Q \defeq \frac{1}{2} \sum_{c=1}^m \gamma_{ab}^c \xi^a \xi^b \ppx{\xi^c} 
    \end{equation*}
    where \((\xi^1, \xi^2, \dots, \xi^m)\) is the dual basis of \((e_1, e_2, \dots, e_m)\). 
    In fact, the condition \([Q, Q]_\rho = 0\) is equivalent to 
    the \(\rho\)-Jacobi identity of the original bracket on \(\gee\). 
\end{example}
\par A \(\rho\)-commutative Q-algebra is a generalization of a Q-algebra in superalgebras. 
It is a special case where a \(\rho\)-commutative algebra \(A\) 
is the one in \thref{example_supercommutative_algebra}.

\section{The graded matrix algebras}
\par Let \(A\) be a \(G\)-graded \(\rho\)-commutative algebra. 
Assume \(G\) is finitely generated. 
\begin{definition}
    Let \(I = (i_1, i_2, \dots, i_n) \in G^{\times n}\) 
    and \(J = (j_1, j_2, \dots, j_m) \in G^{\times m}\).  
    We define the \(A\)-bimodule  
    \begin{equation*}
        M(I \times J; A) \defeq \{(n \times m)\text{-matrices whose entries are in } A\}
    \end{equation*}
    with \(G\)-grading 
    \(M(I \times J; A) = \bigoplus_{d \in G} M_d(I \times J; A)\) defined by 
    \begin{equation*}
        M_d(I \times J; A) \defeq \setin{(f_{k,l})_{k,l=1}^n \in M(I \times J; A)}{
            \begin{array}{l}
            \abs{f_{k,l}} = i_k - j_l + d \\ 
            \text{ for all } k \text{ and } l
        \end{array}}. 
    \end{equation*}
    The left and right actions by \(A\) are defined by 
    \begin{equation*}
        gF \defeq (\rho(i_k, \abs{g})gf_{k,l})_{k,l}, \quad 
        Fg \defeq (\rho(j_l, \abs{g})f_{k,l}g)_{k,l} 
    \end{equation*}
    for all \(g \in A\) and \(F = (f_{k,l})_{k,l} \in M(I \times J; A)\). 
    \par For \(I = J\), the \(A\)-bimodule \(M(I \times I; A)\), 
    simply written as \(M(I; A)\), turns out to be an \(A\)-algebra. 
    \par The group consisting of invertible matrices of \(G\)-degree 0 
    is written as 
    \begin{equation*}
        \GL_0 (I; A) \defeq \setin{F \in M_0 (I; A)}{F \text{ is invertible}}. 
    \end{equation*}
\end{definition}
\begin{definition}
    Let \(F = (f_{k,l})_{k,l} \in M(I \times J; A)\). 
    The (\textit{super})\textit{transpose} of \(F\) is the matrix 
        \(\transpose{F} = (g_{l,k})_{l,k} \in M((-J) \times (-I); A)\) with 
    \begin{equation*}
        g_{l,k} \defeq \rho(i_k, j_l-i_k) f_{k,l}
    \end{equation*}
    where \(-I \defeq (-i_1, \dots, -i_n) \in G^{\times n}\) 
        and \(-J \defeq (-j_1, \dots, -j_m) \in G^{\times m}\). 
\end{definition}
%
\par Now we introduce a \(\rho\)-commutative version of determinant 
    by \cite{Covolo2016,KobayashiNagamachi1984}. 
Let \(I = (i_1, i_2, \dots, i_n) \in G^{\times n}\). 
\(I\) is said to be \textit{even} if \(I \in (G_0)^{\times n}\), 
and \textit{odd} if \(I \in (G_1)^{\times n}\). 
Denote the set of all bijections from \(I\) to \(I\) by \(\Aut(I)\). 
\begin{definition}
    Let \(F=(f_{k,l})_{k,l} \in M_0(I;A)\). 
    For each \(1 \leq k \leq n\), let \(t_k\) be an indeterminate with \(\abs{t_k} = i_k\). 
    \begin{itemize}
        \item[(a)] Suppose that \(I\) is even. 
        \(\rhodet(F) \in A_0\) is defined as the unique element characterized by 
        \begin{align*}
            &{\phantom{{}={}}} \rhodet(F) \cdot t_1 t_2 \cdots t_n \\
            &{}= \sum_{\sigma \in \Aut(I)} 
                f_{1, \sigma(1)} t_{\sigma(1)} 
                \cdot f_{2, \sigma(2)} t_{\sigma(2)} 
                \cdot \cdots 
                \cdot f_{n, \sigma(n)} t_{\sigma(n)}
        \end{align*}
        in \(A \tensor_{\Korper} S_\rho^\bullet \langle t_1, t_2, \dots, t_n \rangle\). 
        \item[(b)] Suppose that \(I\) is odd. 
        \(\rhodet(F) \in A_0\) is defined as the unique element characterized by 
        \begin{align*}
            &{\phantom{{}={}}} \rhodet(F) \cdot t_1 \wedge t_2 \wedge \cdots \wedge t_n \\
            &{}= \sum_{\sigma \in \Aut(I)} 
                f_{1, \sigma(1)} t_{\sigma(1)} 
                \wedge f_{2, \sigma(2)} t_{\sigma(2)} 
                \wedge \cdots 
                \wedge f_{n, \sigma(n)} t_{\sigma(n)}
        \end{align*} 
        in \(A \tensor_{\Korper} \wedge_\rho^\bullet \langle t_1, t_2, \dots, t_n \rangle\). 
    \end{itemize}
    We call \(\rhodet(F)\) the \textit{\(\rho\)-determinant} or the \textit{graded determinant of \(F\)}
        in both cases.  
\end{definition}
\begin{lemma}
    Suppose that \(I\) is even or odd. 
    The map 
    \begin{equation*} 
        \rhodet \colon M_0(I;A) \to A_0
    \end{equation*} 
    defined above satisfies the following properties. 
    \begin{itemize}
        \item[(a)] If \(F \in M_0(I;A)\) is invertible, then \(\rhodet(F) \in (A_0)^\times\). 
        \item[(b)] \(\rhodet(FG) = \rhodet(F)\rhodet(G)\) for all \(F, G \in M_0(I;A)\). 
        \item[(c)] Let \(F = (\mathbf{f}_{k})_{k}, G = (\mathbf{g}_{k})_{k}, 
            H = (\mathbf{h}_{k})_{k} \in M_0(I;A)\) in the row vector representation. 
            Let \(1 \leq k_0 \leq n\). 
            Suppose that \(\mathbf{f}_{k_0} + \mathbf{g}_{k_0} = \mathbf{h}_{k_0}\) 
            and \(\mathbf{f}_{k} = \mathbf{g}_{k} = \mathbf{h}_{k}\) for all \(k \neq k_0\). 
            Then, \(\rhodet(F) + \rhodet(G) = \rhodet(H)\). 
        \item[(d)] Let \(F = (\mathbf{f}_{k})_{k}, G = (\mathbf{g}_{k})_{k} \in M_0(I;A)\) 
            in the row vector representation. 
            Let \(1 \leq k_0 \leq n\) and \(c \in A_0\). 
            Suppose that \(c \mathbf{f}_{k_0} = \mathbf{g}_{k_0}\) 
            and \(\mathbf{f}_{k} = \mathbf{g}_{k}\) for all \(k \neq k_0\). 
            Then, \(c \cdot \rhodet(F) = \rhodet(G)\). 
        \item[(e)] Let \(F = (\mathbf{f}_{k})_{k} \in M_0(I;A)\) in the row vector representation. 
            Suppose that \(\mathbf{f}_{k} = \mathbf{f}_{l}\) for some \(1 \leq k < l \leq n\). 
            Then, \(\rhodet(F) = 0\). 
    \end{itemize}
\end{lemma}
\begin{proof}
    See \cite{Covolo2016,KobayashiNagamachi1984}. 
\end{proof}
In fact, you can define the \(\rho\)-determinant of a matrix of non-zero \(G\)-degree 
and find that the similar properties hold. 
However, we do not need it for discussions from here on. 
For more details, see \cite{Covolo2016,KobayashiNagamachi1984}. 
\begin{lemma}\label{lemma_rhodetexp_exptr}
    Suppose that \(I\) is even or odd. 
    Let \(F = (f_{k,l})_{k,l} = (\mathbf{f}_k)_k \in M(I;A)\). 
    Let \(\varepsilon\) be an indeterminate of \(G\)-degree \(-\abs{F}\). 
    Then, as a matrix over \(A \tensor_{\Korper} \Korper [\varepsilon]/(\varepsilon^2)\), 
        we have
    \begin{equation*}
        \rhodet(1+\varepsilon F)
        = 1 + \trace (\varepsilon F). 
    \end{equation*}
\end{lemma}
\begin{proof}
    Expand \(\rhodet(1+\varepsilon F)\) 
    by using properties of the \(\rho\)-determinant. 
    If a matrix has two or more rows 
        whose entries are all factored by \(\varepsilon\), 
        then the \(\rho\)-determinant of the matrix is 0 by the definition, i.e., 
    \begin{equation*}
        \rhodet \left( \begin{array}{c}
            \vdots \\
            \rho(i_k, \abs{\varepsilon}) \varepsilon \mathbf{f}_k \\
            \vdots \\
            \rho(i_l, \abs{\varepsilon}) \varepsilon \mathbf{f}_l \\
            \vdots 
        \end{array} \right)
        = 0. 
    \end{equation*}
    Hence, setting row vectors 
    \(\mathbf{f}_k^\prime \defeq 
        \rho(i_k, \abs{\varepsilon}) \varepsilon \mathbf{f}_k\) and 
    \(\mathbf{e}_k \defeq (\delta_{k,l})_l\), 
    we see that 
    \begin{align*}
        &\phantom{{}={}}\rhodet(1+\varepsilon F) \\
        &{}= \rhodet(1) + \rhodet \left(\begin{array}{c}
            \mathbf{f}_1^\prime \\ \mathbf{e}_2 \\ \vdots \\ \mathbf{e}_n
        \end{array}\right) 
        + \rhodet \left(\begin{array}{c}
            \mathbf{e}_1 \\ \mathbf{f}_2^\prime \\ \vdots \\ \mathbf{e}_n
        \end{array}\right) 
        + \cdots 
        + \rhodet \left(\begin{array}{c}
            \mathbf{e}_1 \\ \mathbf{e}_2 \\ \vdots \\ \mathbf{f}_n^\prime
        \end{array}\right) \\
        &{}= 1 + \rho(i_1, \abs{\varepsilon}) \varepsilon f_{1,1} 
            + \rho(i_2, \abs{\varepsilon}) \varepsilon f_{2,2} + \cdots 
            + \rho(i_n, \abs{\varepsilon}) \varepsilon f_{n,n}. 
    \end{align*}
\end{proof}
This lemma is considered as an infinitesimal version of a relation between 
the determinant of the exponential and the exponential of the trace. 

\par Then, we introduce a \(\rho\)-commutative version of Berezinian. 
Let \(I = (i_1, \dots, i_n, i_{n+1}, \dots, i_{n+m}) \in (G_0)^{\times n} \times (G_1)^{\times m}\). 
\begin{definition}\label{definition_rhober}
    Let \(F = (f_{k,l})_{k,l} \in M_0(I;A)\). 
    Suppose that \(F\) is written in the block matrix 
    \begin{equation*}
        F = \left( \begin{matrix}
            F_{00} & F_{01} \\ F_{10} & F_{11}
        \end{matrix} \right)
    \end{equation*}
    so that \(F_{00} \in M_0((i_1, \dots, i_n), A)\). 
    Define 
    \begin{equation*}
        \rhoBer(F) \defeq \left\{ \begin{array}{lr}
            \multicolumn{2}{l}{\rhodet(F_{00} - F_{01} F_{11}^{-1} F_{10}) \cdot \rhodet(F_{11})^{-1}} \\
            \multicolumn{2}{r}{\hspace{5em}\text{if both \(F_{00}\) and \(F_{11}\) are invertible},} \\
            0 & \text{otherwise}. 
        \end{array} \right.
    \end{equation*}
    We call \(\rhoBer(F)\) the \textit{\(\rho\)-Berezinian} or the \textit{graded Berezinian of \(F\)}.  
\end{definition}
\begin{proposition}\label{proposition_rhoBer} \ 
    \begin{itemize}
        \item[(i)] \(\rhoBer(FG) = (\rhoBer F)(\rhoBer G)\) for all 
        \(F, G \in \GL_0 (I; A)\).
        \item[(ii)] \(\rhoBer F = (\rhodet F_{00})\rhodet (F_{11} - F_{10} F_{00}^{-1} F_{01})^{-1}\) 
            for all \(F \in \GL_0 (I; A)\). 
        \item[(iii)] \(\rhoBer(\transpose{F}) = \rhoBer F\) for all \(F \in \GL_0 (I; A)\). 
        \item[(iv)] Suppose that \(I\) is even and \(J\) is odd. Then, we have 
            \begin{align*}
                &\phantom{{}={}} \rhoBer \left(\begin{array}{cc|cc}
                    E_{00} & 0 & F_{00} & 0 \\ 
                    E_{10} & E_{11} & F_{10} & F_{11} \\\hline
                    G_{00} & 0 & H_{00} & 0 \\
                    G_{10} & G_{11} & H_{10} & H_{11} 
                \end{array}\right) 
                \left( \eqdef \rhoBer \left(\begin{array}{cc} E & F \\ G & H \end{array} \right) \right) \\
                &{}= \rhoBer \left(\begin{array}{cc} E_{00} & F_{00} \\ G_{00} & H_{00} \end{array} \right)
                \rhoBer \left(\begin{array}{cc} E_{11} & F_{11} \\ G_{11} & H_{11} \end{array}  \right)
            \end{align*}
            for \(E \in \GL_0 (I; A)\), \(F \in M_0 (I \times J; A)\), 
                \(G \in M_0 (J \times I; A)\), and \(H \in \GL_0 (J; A)\).  
    \end{itemize}
\end{proposition}
\begin{proof}
    See \cite{KobayashiNagamachi1984} for (i). 
    (ii), (iii), and (iv) are shown by straightforward computations. 
\end{proof}
\begin{definition}
    Let \(F = (f_{k,l})_{k,l} \in M(I;A)\). 
    Define  
    \begin{equation*}
        \rhotr(F) \defeq \sum_{k=1}^{n+m} \rho(\abs{i_k}+\abs{F}, \abs{i_k}) f_{k,k}. 
    \end{equation*}
    We call \(\rhotr(F)\) the \textit{\(\rho\)-trace} or the \textit{graded trace of \(F\)}.  
\end{definition}

\begin{lemma}\label{lemma_rhoBerexp_exprhotr}
    Let \(F \in M(I;A)\). 
    Let \(\varepsilon\) be an indeterminate of \(G\)-degree \(-\abs{F}\). 
    Then, as a matrix over \(A \tensor_{\Korper} \Korper [\varepsilon]/(\varepsilon^2)\), 
        we have 
    \begin{equation*}
        \rhoBer(1 + \varepsilon F)
        = 1 + \rhotr (\varepsilon F). 
    \end{equation*}
\end{lemma}
\begin{proof}
    We use the same notation as in \thref{definition_rhober}. 
    Note that \(1 + \varepsilon F_{11}\) is invertible in 
    \(A \tensor_{\Korper} \Korper [\varepsilon]/(\varepsilon^2)\), 
    and that \((1 + \varepsilon F_{11})^{-1} = 1 - \varepsilon F_{11}\). 
    Since \(I\) is even and \(J\) is odd, we can apply 
        \thref{lemma_rhodetexp_exptr} to \(F_{00}\) and \(F_{11}\). 
    Then we obtain 
    \begin{align*}
        &\phantom{{}={}} \rhoBer(1+ \varepsilon F) \\
        &{}= \rhodet((1+\varepsilon F_{00})-
            (\varepsilon F_{01})(1 + \varepsilon F_{11})^{-1} (\varepsilon F_{10}))
            \rhodet(1 + \varepsilon F_{11})^{-1} \\
        &{}= \rhodet(1+\varepsilon F_{00} - 
            \varepsilon F_{01} (1 - \varepsilon F_{11}) \varepsilon F_{10})
            \rhodet(1 - \varepsilon F_{11}) \\
        &{}= \rhodet(1 + \varepsilon F_{00}) \rhodet (1 - \varepsilon F_{11}) \\
        &{}= (1 + \trace(\varepsilon F_{00}))(1 - \trace(\varepsilon F_{11})) \\ 
        &{}= 1 + \trace(\varepsilon F_{00}) - \trace(\varepsilon F_{11})
        = 1 + \rhotr(\varepsilon F).
    \end{align*}
\end{proof}

\section{\(\rho\)-commutative functions}
In this section, we discuss the \(G\)-graded \(\rho\)-commutative algebra 
\begin{equation*}
    \rhofunc(U) \defeq \func(U) \dbrack{x^{n+1}, \dots, x^{n+m}}_\bullet 
\end{equation*}
with \(x^1, \dots, x^n\) being coordinates on a neighbourhood \(U\) of a manifold. 
This algebra corresponds to the space of local functions on a \(\rho\)-manifold 
    to be defined in the next section. 
We call \(x^1, \dots, x^n\) together with \(x^{n+1}, \dots, x^{n+m}\) 
    \textit{coordinate functions on \(U\)}. 
This algebra consists of elements of the form 
\begin{equation*}
    f = \sum_{\mathbf{w} \in \ZInteger_{\geq 0}^{\times m}} f_{\mathbf{w}} x^{\mathbf{w}}
\end{equation*}
where each \(\mathbf{w} = (w_1, w_2, \dots, w_m) \in \ZInteger_{\geq 0}^{\times m}\) is a multi-index notation with 
\begin{equation*}
    x^{\mathbf{w}} = (x^{n+1})^{w_1} (x^{n+2})^{w_2} \cdots (x^{n+m})^{w_m},  
\end{equation*}
and \(f_{\mathbf{w}} \in \func(U)\). 
Since \((x^{n+b})^2 = 0\) if \(x^{n+b}\) is odd, 
    we always assume that \(f_{\mathbf{w}} = 0\) if there is \(1 \leq b \leq m\) 
    such that \(x^{n+b}\) is odd and \(w_b \geq 2\). 
If \(f\) is assumed to be homogeneous of degree \(i \in G\), 
    then we suppose \(f_{\mathbf{w}} = 0\) unless \(|x^{\mathbf{w}}|=i\). 
\par We denote by \(\mathscr{I}\) the two-sided ideal of \(\rhofunc(U)\) 
generated by the set \(\{x^{n+1}, \allowbreak \dots, \allowbreak x^{n+m}\}\). 
Namely, \(\mathscr{I}\) consists of all formal power series 
in \(x^{n+1}, \dots, x^{n+m}\)
without terms of polynomial degree 0, 
with coefficients in \(\func(U)\). 
\begin{lemma}\label{lemma_termwise_derivation}
    Let \(D\) be a \(\rho\)-derivation on \(\rhofunc(U)\). 
    Then, for every function \(f = \sum_{\mathbf{w}} f_{\mathbf{w}} x^{\mathbf{w}} 
        \in \rhofunc(U)\), 
    the equation 
    \begin{equation*}
        D(f) = \sum_{\mathbf{w}} D(f_{\mathbf{w}} x^{\mathbf{w}})
    \end{equation*}
    holds. 
\end{lemma}
\begin{proof}
    The proof is almost same as the case of usual formal power series. 
    By definition, we have \(D(\mathscr{I}^2) \subset \mathscr{I}\) 
        and then \(D(\mathscr{I}^{k+1}) \subset \mathscr{I}^k\) 
        for all \(k \geq 1\). 
    Then \(D\) induces the well-defined derivation 
    \begin{equation*}
        D \colon \rhofunc(U) / \mathscr{I}^{k+1} \to \rhofunc(U) / \mathscr{I}^k.
    \end{equation*}
    \par Let \(f = \sum_{|x^{\mathbf{w}}|=i} f_{\mathbf{w}} x^{\mathbf{w}} \in \rhofunc(U)_i\). 
    Set \(g \defeq \sum_{|x^{\mathbf{w}}|=i} D(f_{\mathbf{w}} x^{\mathbf{w}})\). 
    This \(g\) is a well-defined formal power series 
        because \(D(\mathscr{I}^{k+1}) \subset \mathscr{I}^k\) implies that 
        each monomial of \(g\) only depends on 
        \(D(f_{\mathbf{w}} x^{\mathbf{w}})\) in finitely many \(\mathbf{w}\). 
    In \(\rhofunc(U) / \mathscr{I}^k\), we have 
    \begin{align*}
        &\phantom{{}=} D(f + \mathscr{I}^{k+1}) - (g + \mathscr{I}^k) \\
        &{}= D \left( \sum_{|x^{\mathbf{w}}|=i, |\mathbf{w}| \leq k} f_{\mathbf{w}} x^{\mathbf{w}} + \mathscr{I}^{k+1} \right) 
            - \left( \sum_{|x^{\mathbf{w}}|=i, |\mathbf{w}| \leq k} D(f_{\mathbf{w}} x^{\mathbf{w}}) + \mathscr{I}^k \right) \\
        &{}= D \left( \sum_{|x^{\mathbf{w}}|=i, |\mathbf{w}| \leq k} f_{\mathbf{w}} x^{\mathbf{w}} \right) 
            - \sum_{|x^{\mathbf{w}}|=i, |\mathbf{w}| \leq k} D(f_{\mathbf{w}} x^{\mathbf{w}})
            + \mathscr{I}^k \\
        &{}= 0 + \mathscr{I}^k, 
    \end{align*}
    where \(|\mathbf{w}| \defeq \sum_{l=1}^m w_l\) for \(\mathbf{w} \in \ZInteger_{\geq 0}^{\times m}\). 
    Thus, \(D(f) - g \in \mathscr{I}^k\) for all \(k \geq 1\). 
    Since \(\bigcap_{k \geq 1} \mathscr{I}^k = \{0\}\), 
        we conclude that \(D(f) = g\). 
\end{proof}
\thref{lemma_termwise_derivation} implies that any \(\rho\)-derivation on \(\rhofunc(U)\) 
    is uniquely determined by its action on the generators 
    \(x^1, x^2, \dots, x^{n+m}\). 
We denote by \(\ppx{x^1}, \allowbreak \ppx{x^2}, \allowbreak \dots, \allowbreak \ppx{x^{n+m}}\) the \(\rho\)-derivations on \(\rhofunc(U)\) 
    which satisfies \(\ppx{x^a}(x^b) = \delta_{a,b}\) for all \(1 \leq a, b \leq n+m\). 
%
%
\begin{lemma}\label{lemma_infinitesimal_taylor_expansion}
    Let \(f \in \rhofunc(U)\) and \(X \in \rho\Der(\rhofunc(U))\). 
    Let \(\varepsilon\) be an indeterminate with \(\abs{\varepsilon} = -\abs{X}\). 
    For a coordinate \(x=(x^a)_a\) on \(U\), 
    in \(\rhofunc(U) \tensor \Korper \dbrack{\varepsilon}/(\varepsilon^2)\), 
    it holds that 
    \begin{equation*}
        f((x^a+\varepsilon X^a)_a) = f(x) + \varepsilon \sum_{a=1}^{n+m} X^a \ppx[f]{x^a}
    \end{equation*}
    where \(X = \sum_{a} X^a \ppx{x^a}\). 
\end{lemma}
\begin{proof}
    For each \(f_{\mathbf{w}} \in \func(U)\), we have 
    \begin{align*}
        f_{\mathbf{w}} ((x^a + \varepsilon X^a)_a) 
        &{}= \sum_{\mathbf{v} \in \ZInteger_{\geq 0}^{\times n}} \frac{1}{\mathbf{v}!} \ppx[f_{\mathbf{w}}]{x^\mathbf{v}} 
        (\varepsilon X^1)^{v_1} (\varepsilon X^2)^{v_2} \cdots (\varepsilon X^n)^{v_n} \\
        &{}= f_{\mathbf{w}} + \varepsilon \sum_{a=1}^n X^a \ppx[f_{\mathbf{w}}]{x^a}. 
    \end{align*}
    For each monomial \(x^{\mathbf{w}}\), we have 
    \begin{align*}
        &\phantom{{}=} (x^{n+1} + \varepsilon X^{n+1})^{w_1} \cdots (x^{n+m} + \varepsilon X^{n+m})^{w_m} \\
        &{}= x^{n+1} \cdots x^{n+m} + \sum_{b=1}^m x^{n+1} \cdots x^{n+b-1} \varepsilon X^{n+b} x^{n+b+1} \cdots x^{n+m} \\
        &{}= x^{n+1} \cdots x^{n+m} + \varepsilon \sum_{b=1}^m X^b \ppx{x^{n+b}} (x^{n+1} \cdots x^{n+m}). 
    \end{align*}
    Therefore, 
    \begin{align*}
        &\phantom{{}=} f((x^a+\varepsilon X^a)_a) \\
        &{}= \sum_{\mathbf{w}} \left( f_{\mathbf{w}} + \varepsilon \sum_{a=1}^n X^a \ppx[f_{\mathbf{w}}]{x^a} \right) \\
        &\phantom{{}=MMM} \cdot \left( x^{n+1} \cdots x^{n+m} + \varepsilon \sum_{b=1}^m X^{n+b} \ppx{x^{n+b}}(x^{n+1} \cdots x^{n+m})\right) \\
        &{}= \sum_{\mathbf{w}} \left( f_{\mathbf{w}} x^{n+1} \cdots x^{n+m} 
            + \varepsilon \sum_{a=1}^{n+m} X^a \ppx{x^a} (f_{\mathbf{w}} x^{n+1} \cdots x^{n+m}) \right)  \\
        &{}= f + \varepsilon \sum_{a=1}^{n+m} X^a \ppx[f]{x^a}. 
    \end{align*}
\end{proof}
\par Now, we prepare the exponential function and the logarithmic function. 
They are defined only for functions of \(G\)-degree 0. 
However, they satisfy desired properties and differential equations. 
\begin{definition}\label{definition_exponential}
    For \(f = f_0 + f_1 \in \rhofunc(U)_0\) 
    with \(f_0 \in \func(U)\) and \(f_1 \in \mathscr{I}\), 
    define 
    \begin{equation*}
        \exp f \defeq \exp f_0 \cdot \sum_{k=0}^\infty \frac{1}{k!} (f_1)^k.
    \end{equation*}
\end{definition}
\begin{definition}
    Let \(f \in \rhofunc(U)_0^\times\). 
    Set \(f \eqdef f_0(1+h)\) where \(f_0 \in \func(U)^\times\) and \(h \in \mathscr{I}\). 
    Define 
    \begin{equation*}
        \log f \defeq \log f_0 + \sum_{k=1}^\infty \frac{(-1)^{k-1}}{k} h^k.
    \end{equation*}
    as long as \(\log f_0\) makes sense. 
\end{definition}
In the real category, \(\log f\) is defined if \(f_0\) values in strictly positive number. 
In the complex category, \(\log f\) is defined if 
\(f_0\) values in a simply connected domain in \(\Complex\), 
and if \(\log f_0\) is one branch of logarithm. 
\begin{proposition}\label{proposition_properties_of_exponantial}
    For all \(f, g \in \rhofunc(U)_0\), 
    the following properties hold. 
    \begin{itemize}
        \item[(i)] \(\exp(f + g) = \exp f \cdot \exp g\).
        \item[(ii)] \(\displaystyle \ppx{x^a} (\exp f) = \ppx[f]{x^a} \exp f \) \ 
            for every \(1 \leq a \leq m+n\). 
    \end{itemize}
\end{proposition}
\begin{proof}
    We use the same notation in \thref{definition_exponential}. 
    Since both \(f_1\) and \(g_1\) have \(G\)-degree 0, 
        \(\Korper \dbrack{f_1, g_1}\) is regarded as 
        a usual commutative formal power series with two indeterminates. 
    For (i), we have 
    \begin{align*}
        \exp f \cdot \exp g 
        &{}= \exp(f_0) \exp(g_0) 
            \left(\sum_{k=0}^\infty \frac{1}{k!} f_1^k \right)
            \left(\sum_{k=0}^\infty \frac{1}{k!} g_1^k \right) \\
        &{}= \exp(f_0 + g_0) \left(\sum_{k=0}^\infty \frac{1}{k!} (f_1 + g_1)^k \right) \\
        &{}= \exp (f + g). 
    \end{align*}
    For (ii), we have 
    \begin{align*}
        &\phantom{{}=} \ppx{x^a} (\exp f) \\
        &{}= (\exp f_0) \ppx[f_0]{x^a} \sum_{k=0}^\infty \frac{1}{k!}(f_1)^k
        + (\exp f_0) \sum_{k=1}^\infty \frac{1}{(k-1)!} \ppx[f_1]{x^a} (f_1)^{k-1} \\
        &{}= (\exp f_0) \left( \ppx[f_0]{x^a} + \ppx[f_1]{x^a} \right) \sum_{k=0}^\infty \frac{1}{k!}(f_1)^k 
        = \ppx[f]{x^a} \exp f. 
    \end{align*}
\end{proof}
\begin{proposition}
    For all \(f, g \in \rhofunc(U)_0^\times\), 
    the following properties hold as long as both sides make sense. 
    \begin{itemize}
        \item[(i)] \(\log(fg) = \log f + \log g\).
        \item[(ii)] \(\displaystyle \ppx{x^a} (\log f) = \ppx[f]{x^a} \frac{1}{f}\) \ 
            for every \(1 \leq a \leq m+n\). 
    \end{itemize}
\end{proposition}
\begin{proof}
    The proof is similar to \thref{proposition_properties_of_exponantial}. 
\end{proof}

\section{\(\rho\)-manifolds}
In this section, we define \(\rho\)-manifolds 
by the analogy with supermanifolds or other graded manifolds 
(e.g. \cite{Bartocci1991,Voronov2019}). 
\begin{definition}
    Let \(G\) be an abelian group and \(\rho\) be a commutation factor on \(G\). 
    A \textit{\(\rho\)-manifold} is a ringed space \((M, \rhofunc)\) such that 
    \begin{itemize}
        \item[(i)] its underlying space \(M\) is a connected \(n\)-dimensional manifold, 
        \item[(ii)] there is an open covering \(\{U_\alpha\}_\alpha\) of \(M\) 
        which has a smooth coordinate \(\{(x_\alpha^1, x_\alpha^2, \dots, x_\alpha^n)\}_\alpha\)
        and a family 
        \begin{equation*}
            \{(x_\alpha^{n+1}, x_\alpha^{n+2}, \dots, x_\alpha^{n+m}) 
            \in (\rhofunc (U_\alpha))^{\times m}\}_\alpha
        \end{equation*}
        with 
        \begin{equation*}
            \rhofunc (U) \isom \func (U) \dbrack{x_\alpha^{n+1}, x_\alpha^{n+2}, \dots, x_\alpha^{n+m}}_\bullet. 
        \end{equation*}
    \end{itemize}
    We call each open set \(U_\alpha\) and tuple of 
    local functions \((x_\alpha^1, x_\alpha^2, \dots, x_\alpha^{n+m})\),  
    a \textit{coordinate neighbourhood} and its \textit{coordinate functions}, respectively. 
\end{definition}
\begin{remark}
    Since \(M\) is connected, a tuple of degrees 
    \begin{equation*}
        (|x_\alpha^{n+1}|, |x_\alpha^{n+2}|, \dots, |x_\alpha^{n+m}|) \in G^{\times m}
    \end{equation*}
    is constant on indices \(\alpha\) up to permutations of components. 
    Hence, we may replace \(G\) with its subgroup finitely generated 
    by \(|x_\alpha^{n+1}|, \allowbreak |x_\alpha^{n+2}|, \allowbreak 
        \dots, \allowbreak |x_\alpha^{n+m}|\) 
    for some index \(\alpha\). 
    We choose an index \(\alpha\) and assume that all coordinate functions \(y = (y^1, \dots, y^{m+n})\) 
        satisfy \(I \defeq (|x_\alpha^1|, \dots, |x_\alpha^{n+m}|) = (|y^1|, \dots, |y^{n+m}|)\). 
\end{remark}
\begin{definition}
    For a \(\rho\)-manifold \((M, \rhofunc)\), 
    we define 
    \begin{equation*}
        \Vect_\rho(M) \defeq \rho\Der(\rhofunc(M)),
    \end{equation*}
    and we call its element  
    \textit{a vector field on \((M, \rhofunc)\)}. 
\end{definition}

%
\par On the intersection of coordinates \(x=(x^a)_a\) on \(U\) and \(y=(y^a)_a\) on \(V\), 
the \textit{Jacobian matrix} \(J_{xy}\) is defined by 
\begin{equation*}
    J_{xy} \defeq \left( \ppx[y^a]{x^b} \right)_{a,b}
    = \left(\begin{array}{ccc}
        \ppx[y^1]{x^1} & \cdots & \ppx[y^1]{x^{n+m}} \\ 
        \vdots & \ddots & \vdots \\
        \ppx[y^{n+m}]{x^1} & \cdots & \ppx[y^{n+m}]{x^{n+m}}
    \end{array}
    \right)
    \in \GL_0 (I; \rhofunc(U \cap V)). 
\end{equation*}
\begin{proposition}
    For a \(\rho\)-manifold \(M\) and its coordinate \((x^a)_a\) on \(U\), 
    there exist the unique \(\rho\)-derivations 
    \(\ppx{x^1}, \dots, \ppx{x^{n+m}}\) on \(\rhofunc(U)\) with 
    \begin{itemize}
        \item[(i)] \(\ppx{x^a}(x^b) = \delta_{a,b}\) for all \(1 \leq a, b \leq m+n\), 
        \item[(ii)] 
            \(\displaystyle
                \Vect_\rho(U) \isom \left\langle \ppx{x^1}, \dots, \ppx{x^{n+m}} 
                    \right\rangle
            \) as \(\rhofunc(U)\)-modules,
        \item[(iii)] 
            as a \(\Korper\)-linear map \(\rhofunc(U) \to \rhofunc(U)\),
            \begin{equation*}
                \ppx{x^a} \ppx{x^b} = \rho(-|x^a|, -|x^b|) \ppx{x^b} \ppx{x^a}
                \quad (1 \leq a, b \leq n+m), 
            \end{equation*}
        \item[(iv)] on the intersection of \(U\) with a coordinate \(x=(x^a)_a\) 
            and \(V\) with another coordinate \(y=(y^a)_a\), 
            \begin{equation*}
                \ppx{x^b} = \sum_{a=1}^{n+m} \ppx[y^a]{x^b} \ppx{y^a}
                \quad (1 \leq b \leq n+m). 
            \end{equation*}
    \end{itemize}
\end{proposition}
\begin{proof}
    We define \(\ppx{x^a}\) by extending (i) as a \(\rho\)-derivation on \(\rhofunc(U)\). 
    \par To see (ii), let \(X \in \Vect_\rho (U)\) and set 
        \(\widetilde{X} \defeq \sum_{a=1}^{n+m} X(x^a) \ppx{x^a}\). 
    Clearly, \(X(x^a) =\widetilde{X}(x^a)\) and \(X((x^a)^k) = \widetilde{X}((x^a)^k)\). 
    From the result for derivations on \(\func(M)\), 
        it also holds that \(X(g) = \widetilde{X}(g)\) for \(g \in \func(U)\). 
    For any function \(f = \sum_{\mathbf{w}} f_{\mathbf{w}} x^{\mathbf{w}}\), we have 
    \begin{align*}
        &\phantom{{}=} (X-\widetilde{X})(f) \\
        &{}= \sum_{\mathbf{w}} (X-\widetilde{X})(f_{\mathbf{w}}) x^{\mathbf{w}} \\
        &\phantom{{}=} + \sum_{\mathbf{w}} f_{\mathbf{w}} \sum_{b=1}^m \rho_{\mathbf{w}, b} (x^{n+1})^{w_1} \cdots 
            (X-\widetilde{X})((x^{n+b})^{w_b}) \cdots (x^{n+m})^{w_m} \\ 
        &{}= 0 
    \end{align*}
    where \(\rho_{\mathbf{w}, b}\) is some constant determined from 
        the commutation between the derivation \(X-\widetilde{X}\) and \((x^{n+c})^{w_c}\) for \(c < b\). 
    \par (iii) is proved by a direct computation. It is enough to show for \(n+1 \leq a < b \leq n+m\). 
    \par For (iv), let \(f = \sum_{\mathbf{w}} f_{\mathbf{w}} y^{\mathbf{w}}\). 
    It holds that 
    \begin{equation*}
        \sum_{a=1}^n \ppx[y^a]{x^b} \ppx[f_{\mathbf{w}}]{y^a} = \ppx[f_{\mathbf{w}}]{x^b}
    \end{equation*}
    because if \(1 \leq b \leq n\) then it is just a chain rule for the underlying manifold \(M\), 
    and if \(n+1 \leq b \leq n+m\) then both sides are 0. 
    It also holds that 
    \begin{align*}
        &\phantom{{}=} \sum_{c=1}^m \ppx[y^{n+c}]{x^b} \ppx[y^{\mathbf{w}}]{y^{n+c}} \\
        &{}= \sum_{c=1}^m \rho\left( -|x^b|, \sum_{c^\prime = 1}^{a-1} w_{c^\prime} |y^{n+c^\prime}|\right) 
            (y^{n+1})^{w_1} \cdots (y^{n+c-1})^{w_{c-1}} \\ 
        &\phantom{{}=MMMMMM} \cdot w_c \ppx[y^{n+c}]{x^b} (y^{n+c})^{w_c-1} \cdot 
            (y^{n+c+1})^{w_{c+1}} \cdots (y^{n+m})^{w_m} \\
        &{}= \ppx[y^{\mathbf{w}}]{x^b}.
    \end{align*}
    Therefore, 
    \begin{align*}
        \sum_{a=1}^{n+m} \ppx[y^a]{x^b}\ppx{y^a} (f) 
        &{}= \sum_{\mathbf{w}} \left( \sum_{a=1}^{n+m} \ppx[y^a]{x^b} \ppx[f_{\mathbf{w}}]{y^a} y^{\mathbf{w}}
            + \sum_{a=1}^{n+m} \ppx[y^a]{x^b} f_{\mathbf{w}} \ppx[y^{\mathbf{w}}]{y^a} \right) \\
        &{}= \sum_{\mathbf{w}} \left( \sum_{a=1}^{n} \ppx[y^a]{x^b} \ppx[f_{\mathbf{w}}]{y^a} y^{\mathbf{w}}
            + \sum_{a=n+1}^{n+m} \ppx[y^a]{x^b} f_{\mathbf{w}} \ppx[y^{\mathbf{w}}]{y^a} \right) \\
        &{}= \sum_{\mathbf{w}} \left(\ppx[f_{\mathbf{w}}]{x^b} y^{\mathbf{w}} + f_{\mathbf{w}} \ppx[y^{\mathbf{w}}]{x^b} \right) 
        = \ppx[f]{x^b}. 
    \end{align*}
\end{proof}
\begin{definition}
    A \textit{vector bundle \(E\) on a \(\rho\)-manifold \((M, \rhofunc)\) of rank \(r\)} is 
    a locally free \(\rhofunc\)-module \(\Gamma(-, E)\) of rank \(r\). 
    In other words, 
        the sheaf \(\Gamma(-, E)\) satisfies the following conditions. 
    \begin{itemize}
        \item[(i)] There is a tuple \(I = (i_1, \dots, i_r) \in G^{\times r}\). 
        \item[(ii)] There is an open covering \(\{V_\alpha\}_\alpha\) of \(M\) 
        which forms an atlas \(\{(x_\alpha^1, \allowbreak \dots, \allowbreak x_\alpha^{n+m})\}_\alpha\) 
            on \((M, \rhofunc)\) and satisfies 
            \(\Gamma(V_\alpha, E) \isom \langle e^\alpha_1, \dots, e^\alpha_r \rangle\) 
            as \(\rhofunc(V_\alpha)\)-modules with \((|e^\alpha_1|, \dots, |e^\alpha_r|) = -I\). 
        \item[(iii)] On the intersection 
        \(V_{\alpha\beta} \defeq V_\alpha \cap V_\beta\), 
        there exists a matrix 
        \(g_{\alpha\beta} \in GL_0 (I; \rhofunc(V_{\alpha\beta}))\), 
        called the \textit{transition functions} or the \textit{transition rules}, 
        such that the transformation between \((e^\alpha_k)_k\) and \((e^\beta_l)_l\) 
        has the matrix representation
        \begin{equation*}
            (e^\alpha_1 |_{V_{\alpha\beta}} \ \cdots \ e^\alpha_r |_{V_{\alpha\beta}}) 
            = (e^\beta_1 |_{V_{\alpha\beta}} \ \cdots \ e^\beta_r |_{V_{\alpha\beta}}) \ g_{\alpha\beta} \ .
        \end{equation*}
        Moreover, the family of transition functions 
        \(\{g_{\alpha\beta}\}_{V_{\alpha\beta} \neq \varnothing}\) 
        satisfies the cocycle condition 
        \begin{equation*}
            \begin{array}{cl}
                g_{\alpha\alpha} = 1 & \text{ for all } \alpha, \text{ and }\\
                g_{\alpha\beta} g_{\beta\gamma} g_{\gamma\alpha} = 1 
                & \text{ for all } \alpha, \beta, \gamma \text{ with } 
                V_\alpha \cap V_\beta \cap V_\gamma \neq \varnothing.
            \end{array}
        \end{equation*}
    \end{itemize}
    In this case, setting the dual of \(e^\alpha_k\) as \(\xi_\alpha^k\) for each \(k\), 
        the sheaf \(\mathscr{E}\) defined by 
    \begin{equation*}
        \mathscr{E} (V_\alpha) \defeq \func(V_\alpha) \dbrack{x_\alpha^{n+1}, \dots, x_\alpha^{n+m}, 
            \xi_\alpha^1, \dots, \xi_\alpha^r}_\bullet
    \end{equation*}
    forms a \(\rho\)-manifold. 
    We call \(\xi_\alpha^1, \dots, \xi_\alpha^r\) the \textit{linear coordinates} on \(E\). 
    They are transformed as 
        \begin{equation*}
            \left(
            \begin{matrix}
                \xi_\beta^1 \big|_{V_{\alpha\beta}} \\ \vdots \\ \xi_\beta^r \big|_{V_{\alpha\beta}}
            \end{matrix}
            \right) 
            = g_{\alpha\beta} 
            \left(
            \begin{matrix}
                \xi_\alpha^1 \big|_{V_{\alpha\beta}} \\ \vdots \\ \xi_\alpha^r \big|_{V_{\alpha\beta}}
            \end{matrix}
            \right) .
        \end{equation*}
\end{definition}
\begin{example}\label{example_tangent_bundle}
    Let \((M, \rhofunc)\) be a \(\rho\)-manifold. 
    The \textit{tangent bundle of \(M\)} is 
    a vector bundle \(TM\) over \(M\) defined by
    the linear coordinates \((\partial_x^1)^\ast, \dots, (\partial_x^{n+m})^\ast\) 
    for each coordinate \((x^a)_a\) on a neighbourhood \(U\) 
    with \(\abs{(\partial_x^a)^\ast} = |x^a|\) for \(1 \leq a \leq n+m\), 
    and by the transition functions being the Jacobian matrices.  
    On the intersection of coordinates \((x^a, (\partial_x^a)^\ast)_a\) 
    and \((y^a, (\partial_y^a)^\ast)_a\), 
    the linear coordinates \(((\partial_x^a)^\ast)_a\) and \(((\partial_y^a)^\ast)_a\) 
    are transformed as 
    \begin{equation*}
        (\partial_y^a)^\ast = \sum_{b=1}^{n+m} (\partial_x^b)^\ast \ppx[y^a]{x^b}. 
    \end{equation*}
\end{example}
\begin{example}\label{example_cotangent_bundle}
    Similarly, 
    the \textit{cotangent bundle \(T^\ast M\) of \(M\)} is defined by 
    the linear coordinates \(p_1, \dots, p_{n+m}\) 
    with \(|p_a| = -|x^a|\),
    and by the transition functions being the transposition of the Jacobian matrices. 
    The linear coordinates are transformed as 
    \begin{equation*}
       q_a = \sum_{b=1}^{n+m} \ppx[x^b]{y^a} p_b 
    \end{equation*}
    from \((x^a, p_a)_a\) to \((y^a, q_a)_a\). 
\end{example}
We introduce two different types of the degree shift of 
a vector bundle. 
\begin{definition}
    Let \(E\) be a vector bundle of rank \(r\) 
        on a \(\rho\)-manifold \((M, \rhofunc)\) characterized by 
    the linear coordinates \(\xi_\alpha^1, \dots, \xi_\alpha^r\) 
    and the transition functions \(g_{\alpha\beta} = ((g_{\alpha\beta})^a_b)_{a,b}\) with  
    \begin{equation*}
        \xi_\beta^a = \sum_{a=1}^r (g_{\alpha\beta})^a_b \xi_\alpha^b
        \quad ((g_{\alpha\beta})^a_b \in \rhofunc(U_{\alpha\beta}))
        .
    \end{equation*}
    \begin{itemize}
        \item[(a)] The \textit{shift of \(E\)} or the \textit{shifted \(E\)} is a vector bundle \(\Pi E\) 
            over the \(\rho^\prime\)-manifold \((M, \func_{\rho^\prime}) = (M, \rhofunc)\)
            by the linear coordinates \((\xi^\prime)_\alpha^1, \dots, (\xi^\prime)_\alpha^r\) 
            and the transition rules 
            \begin{equation}
                (\xi^\prime)_\beta^a = \sum_{a=1}^r (g_{\alpha\beta})^a_b (\xi^\prime)_\alpha^b
                \label{equation_shifted_transition}
            \end{equation}
            with \(G^\prime\)-degrees \(|(\xi^\prime)_\alpha^a|^\prime \defeq (1, |\xi_\alpha^a|)\) 
            for \(1 \leq a \leq r\). 
        \item[(b)] Let \(i \in G\). 
        The \textit{degree \(i\)-shift of \(E\)} is a vector bundle \([-i]E\) 
        over the \(\rho\)-manifold \((M, \rhofunc)\)
        by the linear coordinates \((\xi^\prime)_\alpha^1, \dots, (\xi^\prime)_\alpha^r\) 
        and the transition rules 
        \eqref{equation_shifted_transition}
        with \(G\)-degrees \(|(\xi^\prime)_\alpha^a| \defeq |\xi_\alpha^a| - i\) 
        for \(1 \leq a \leq r\). 
    \end{itemize}
\end{definition}
The double shift \(\Pi \Pi E\) is naturally isomorphic to 
the original vector bundle \(E\) by definition, but
the double degree \(i\)-shift \([-i][-i]E = [-2i]E\) is not in general. 
\begin{example}\label{example_shifted_tangent_bundle}
    The shifted tangent bundle \(\Pi TM\) of \((M, \rhofunc)\) is described by 
    the linear coordinates \(dx^1, \dots, dx^{n+m}\) 
    with \(|dx^a|^\prime = (1, |x^a|)\) and the transition rule 
    \begin{equation*}
        dy^a = \sum_{b=1}^{n+m} dx^b \ppx[y^a]{x^b} 
    \end{equation*}
    from \((x^a, dx^a)_a\) to \((y^a, dy^a)_a\). 
    \par The sheaf of functions on \(\Pi TM\) is called 
        the \textit{de Rham complex of \((M, \rhofunc)\)}
        and denoted by \(\Omega^\bullet_M\) or simply \(\Omega^\bullet\). 
    Functions on \(\Pi TM\) are called \textit{differential forms on \((M, \rhofunc)\)}. 
    \par This is the analogy of the de Rham complex of non-graded manifolds. 
    The fundamental operations on differential forms are also defined like
    the \textit{de Rham differential} 
    \begin{equation*}
        d \defeq \sum_{a} dx^a \ppx{x^a}, 
    \end{equation*}
    the \textit{Lie derivative along \(X \in \Vect_\rho(M)\) on \(\Omega^\bullet(M)\)} 
    \begin{equation*}
        L_X \defeq \sum_{a} \left( dx^a \frac{\partial X^b}{\partial x^a} \ppx{dx^b} 
            + X^a \ppx{x^a} \right) , 
    \end{equation*}
    and \textit{the interior product of \(X \in \Vect_\rho(M)\)} 
    \begin{equation*}
        i_X \defeq \sum_{a} X^a \ppx{dx^a}.
    \end{equation*}
    Here we used a local description \(X = \sum_{a} X^a \ppx{x^a}\) 
        of a vector field \(X \in \Vect_\rho(M)\). 
    The Cartan identities in this version 
    \begin{equation*}
        [L_X, L_Y]_{\rho^\prime} = L_{[X,Y]_\rho}, 
        \quad [d, L_X]_{\rho^\prime} = 0, \quad \text{etc.}
    \end{equation*}
    can be shown \cite{Ciupala2005}. 
\end{example}
\begin{example}\label{example_shifted_cotangent_bundle}
    The shifted cotangent bundle \(\Pi T^\ast M\) is described by  
        the linear coordinates \(x_1^\ast, \dots, x_{n+m}^\ast\) 
    with \(|x_a^\ast|^\prime = (1, -|x^a|)\)
    and the transition rule 
    \begin{equation*}
        y_a^\ast = \sum_{b=1}^{n+m} \ppx[x^b]{y^a} x_b^\ast 
    \end{equation*}
    from \((x^a, x_a^\ast)_a\) to \((y^a, y_a^\ast)_a\). 
\end{example}
\begin{example}\label{example_i_shifted_cotangent_bundle}
    Consider the degree \(i\)-shift of the cotangent bundle \(T^\ast M\) 
    using the notation in \thref{example_shifted_cotangent_bundle}. 
    The linear coordinates on \([-i]T^\ast M\) are \(x_1^\ast, \dots, x_{n+m}^\ast\) 
        with \(G\)-gradings \(|x_a^\ast| = -|x^a|-i\). 
    The \textit{degree-\(i\) Schouten bracket} on \(\rhofunc([-i]T^\ast M)\) 
        is defined by 
    \begin{equation}
        \dbrack{f, g} \defeq \sum_{a=i}^{n+m} 
            \left( \rho(|f|+|x^a|+i, |x^a|+i) \ppx[f]{x_a^\ast} \ppx[g]{x^a}
            - \rho(|x^a|, |f|+i) \ppx[f]{x^a} \ppx[g]{x_a^\ast} \right)
    \end{equation}
    locally on a coordinate \((x^a, x_a^\ast)_a\). 
    The degree-0 Schouten bracket on \(T^\ast M\) corresponds to the Poisson bracket. 
    \begin{lemma}\label{lemma_i_Schouten_bracket_coordinate}
        Take a coordinate \((x^a, x^\ast_a)_a\) on \([-i]T^\ast M\) and rewrite it as 
            \begin{equation*}
                (x^1, \dots, x^{n+m}, x_1^\ast, \dots, x_{n+m}^\ast) \eqdef (z^1, \dots, z^{2(n+m)}).
            \end{equation*}
        Then, we have 
        \begin{equation*}
            \dbrack{f, g} = \sum_{a, b=1}^{2(n+m)} \rho(|z^a|, |f| - |z^a|) 
                \ppx[f]{z^a} \dbrack{z^a, z^b} \ppx[g]{z^b}. 
        \end{equation*}
    \end{lemma}
    \begin{proof}
        Note that \(\dbrack{x_a^\ast, x^b} = \delta_{a,b}\) and 
        \(\dbrack{x^a, x_b^\ast} = -\rho(|x^a|, |x^a| + i)\delta_{a,b}\) 
        for \(1 \leq a, b \leq n+m\) by the definition. 
        Hence, we have 
        \begin{align*}
            &\phantom{{}=} \sum_{a, b=1}^{2(n+m)} \rho(|z^a|, |f| - |z^a|) 
                \ppx[f]{z^a} \dbrack{z^a, z^b} \ppx[g]{z^b} \\
            &{}=\sum_{a=1}^{n+m} \rho(|x_a^\ast|, |f|-|x_a^\ast|) \ppx[f]{x_a^\ast} 
                \dbrack{x_a^\ast, x^a} \ppx[g]{x^a} \\
            &\phantom{{}=}+ \sum_{a=1}^{n+m} \rho(|x^a|, |f|-|x^a|) \ppx[f]{x^a} 
                \dbrack{x^a, x_a^\ast} \ppx[g]{x_a^\ast} \\
            &{}= \sum_{a=1}^{n+m} \rho(-|x^a|-i, |f|+|x^a|+i) 
                    \ppx[f]{x_a^\ast} \ppx[g]{x^a} \\
            &\phantom{{}=}+ \sum_{a=1}^{n+m} \rho(|x^a|, |f|-|x^a|) \rho(|x^a|, |x^a|+i) 
                    \ppx[f]{x^a} \ppx[g]{x_a^\ast} \\
            &{}= \dbrack{f, g}. 
        \end{align*}
    \end{proof}
    \begin{proposition}\label{prop_i_Schouten_bracket_properties}
        The degree-\(i\) Schouten bracket has the following properties. 
        \begin{itemize}
            \item[(i)] \(\abs{\dbrack{f, g}} = |f| + |g| + i\), 
            \item[(ii)] \(\dbrack{f, g} = -\rho(|f| + i, |g| + i) \dbrack{g, f}\), 
            \item[(iii)] \(\dbrack{f, \dbrack{g, h}} = \dbrack{\dbrack{f, g}, h}
                + \rho(|f| + i, |g| + i) \dbrack{g, \dbrack{f, h}}\), 
            \item[(iv)] \(\dbrack{f, gh} = \dbrack{f, g} h 
                + \rho(|f| + i, |g|) g \dbrack{f, h}\)
        \end{itemize}
        for all \(f, g, h \in \rhofunc([-i]T^\ast M)\). 
    \end{proposition}
    \begin{proof}
        Each property is checked by using \thref{lemma_i_Schouten_bracket_coordinate}. 
        For example, (iv) follows from 
        \begin{align*}
            \dbrack{f, gh} 
            &{}= \rho(|z^a|, |f|-|z^a|) \ppx[f]{z^a} \dbrack{z^a, z^b} \ppx[g]{z^b} h \\
            &\phantom{{}=}+ \rho(|z^a|, |f|-|z^a|) \ppx[f]{z^a} \dbrack{z^a, z^b} 
                g \cdot \rho(-|z^b|, g) \ppx[h]{x^b} \\
            &{}= \rho(|z^a|, |f|-|z^a|) \ppx[f]{z^a} \dbrack{z^a, z^b} \ppx[g]{z^b} h \\
            &\phantom{{}=}+ \rho(i+|f|, g) \rho(|z^a|, |f|-|z^a|) 
                g \ppx[f]{z^a} \dbrack{z^a, z^b} \ppx[h]{x^b} \\
            &{}= \dbrack{f, g} h + \rho(|f| + i, |g|) g \dbrack{f, h}. 
        \end{align*}
    \end{proof}
\end{example}
%
\begin{definition}
    A \textit{\(\rho\)-Q-manifold} is a pair \((M, Q)\) of 
    a \(\rho\)-manifold \(M\) and a vector field \(Q \in \Vect_\rho(M)\) 
    such that the pair \((\rhofunc(M), Q)\) forms a \(\rho\)-Q-algebra. 
    In this case, \(Q\) is called a \textit{homological vector field on \(M\)}. 
\end{definition}
\begin{example}
    Any \(\rho\)-manifold with the zero vector field \(Q = 0\) 
        is obviously a \(\rho^\prime\)-Q-manifold (cf. \thref{example_trivial_rho_Q_algebra}). 
\end{example}
\begin{example}
    Let \(M\) be a \(\rho\)-manifold. 
    As we discussed in \thref{example_shifted_tangent_bundle}, 
    the de Rham differential \(d = \sum_a dx^a \ppx{x^a}\) is squared to zero. 
    In fact, 
    \begin{align*}
        d(df) &{}=  \sum_{a,b} dx^a \ppx{x^a} \left( dx^b \ppx[f]{x^b} \right)
        = \sum_{a,b} \rho(-|x^a|, |x^b|) dx^a dx^b 
            \dfrac{\partial^2 f}{\partial x^a \partial x^b} \\ 
        &{}=\sum_{b,a} \rho(-|x^b|, |x^a|) dx^b dx^a
            \dfrac{\partial^2 f}{\partial x^b \partial x^a} \\ 
        &{}=\sum_{b,a} \rho(-|x^b|, |x^a|) \cdot (-\rho(|x^b|, |x^a|)) 
            \cdot \rho(-|x^b|, -|x^a|) dx^a dx^b 
            \dfrac{\partial^2 f}{\partial x^a \partial x^b} \\
        &{}=-\sum_{b,a} \rho(-|x^a|, |x^b|) dx^a dx^b 
            \dfrac{\partial^2 f}{\partial x^a \partial x^b}
        = 0
    \end{align*}
    for all \(f \in \rhofunc(\Pi TM) = \Omega^\bullet (M)\). 
    Thus, a pair \((\Pi TM, d)\) is a \(\rho^\prime\)-Q-manifold. 
    \par Next, we assume \(M\) is a \(\rho\)-Q-manifold 
        with a homological vector field \(Q\). 
    Since the Lie derivative along \(Q\) on \(\Omega^\bullet (M)\) is squared to zero 
    because 
    \begin{equation*}
        2 L_Q \circ L_Q = [L_Q, L_Q]_{\rho^\prime} = L_{[Q, Q]_\rho} = 0, 
    \end{equation*}
    a pair \((\Pi TM, L_Q)\) also becomes a \(\rho^\prime\)-Q-manifold. 
    \par Moreover, the Cartan identity for \(\Omega^\bullet (M)\) implies that 
        \([d, L_Q]_{\rho^\prime} = 0\). 
    It follows that \([d + L_Q, d + L_Q]_{\rho^\prime} = 0\), 
    which gives another \(\rho^\prime\)-Q-manifold \((\Pi TM, d + L_Q)\). 
\end{example}
\begin{example}\label{example_Q_structure_on_i_shifted_cotangent_bundle}
    Consider the degree-\(i\) Schouten bracket 
    (\thref{example_i_shifted_cotangent_bundle}) 
    on the degree-\(i\) shift of the cotangent bundle \([-i] T^\ast M\). 
    Suppose that \(M\) has a homological vector field \(Q\). 
    A natural correspondence 
    \begin{equation*}
        \defnamelessmap{\Vect_\rho (U)}{\rhofunc([-i]T^\ast U)}{\displaystyle\ppx{x^a}}{x_a^\ast}
    \end{equation*}
    on \(U\) with a coordinate \(x = (x^a)_a\) 
    maps \(\restr{Q}{U} = \sum_a Q^a \ppx{x^a}\) 
    to a local function \(\sum_a Q^a x_a^\ast\), 
    which is patched up to some global function \(f_Q\) 
    of degree \(\abs{f_Q} = |Q| - i\)
    because the transition rules for \(\left(\ppx{x^a}\right)_a\) 
        and \((x_a^\ast)_a\) are identical. 
    This map is of degree \(-i\). 
    Then, \(\widetilde{Q} \defeq \dbrack{f_Q, -}\) is a \(\rho\)-derivation 
        on \(\rhofunc([-i]T^\ast M)\) of degree \(|Q|\) 
        by (iv) in \thref{prop_i_Schouten_bracket_properties}. 
    Using \thref{lemma_i_Schouten_bracket_coordinate}, we have locally 
    \begin{align*}
        &\phantom{{}=}\dbrack{f_Q, f_Q} \\
        &{}= \rho(-|x^a|-i, |Q|+|x^a|) \ppx{x_a^\ast}(Q^b x_b^\ast) \ppx{x^a}(Q^c x_c^\ast) \\
        &\phantom{{}=} - \rho(|x^a|, |Q|) \ppx{x^a} (Q^b x_b^\ast) \ppx{x_a^\ast} (Q^c x_c^\ast) \\
        &{}= Q^a \ppx[Q^c]{x^a} x_c^\ast - \rho(|x^a|, |Q|)\rho(|x^a|+i, |Q|+|x^a|) \ppx[Q^b]{x^a} x_b^\ast Q^a \\
        &{}= 2 Q^a \ppx[Q^c]{x^a} x_c^\ast = 0.
    \end{align*}
    The last equality is just an interpretation of the condition \([Q,Q]_\rho =0\). 
    Hence, for all \(f \in \rhofunc([-i]T^\ast M)\), we have 
    \begin{align*}
        &\phantom{{}=}\widetilde{Q} (\widetilde{Q} f) 
        = \dbrack{f_Q, \dbrack{f_Q, f}} \\
        &{}= \dbrack{\dbrack{f_Q, f_Q}, f} 
            + \rho(|f_Q| + i, |f_Q| + i) \dbrack{f_Q, \dbrack{f_Q, f}} \\
        &{}= -\dbrack{f_Q, \dbrack{f_Q, f}}, 
    \end{align*}
    which yields that \(\widetilde{Q} \circ \widetilde{Q} = 0\). 
    Therefore, \(([-i]T^\ast M, \widetilde{Q})\) is  
        a \(\rho\)-Q-manifold. 
\end{example}
\begin{example}\label{example_Q_structure_on_noncommutative_tori}
    A noncommutative torus (\thref{example_noncommutative_torus_algebra}) is considered as 
    a \(\rho\)-manifold \(M\) whose underlying manifold is a point. 
    A \(\rho\)-commutative algebra \(A_\Theta\) is 
        \(\ZInteger^{\times m}\)-graded and its generators are graded positively, 
        which gives 
    \begin{equation*}
        A_\Theta = C^\omega (\{\ast\}, \Complex) \dbrack{u^1, \dots, u^m}_\bullet
            = \Complex [u^1, \dots, u^m]. 
    \end{equation*}
    Its BRST quantization corresponds to the \(\rho^\prime\)-commutative algebra 
        \(C^\omega_{\rho^\prime} (\Pi\gee \times M, \Complex)\)
        where \(\gee\) is the Lie algebra of \(m\)-dimensional torus. 
    The corresponding BRST differential \(Q\) is 
    \begin{equation*}
        Q = -\sum_{a=1}^m 2 \pi \sqrt{-1} \eta^a u^a \ppx{u^a}
            \in \Vect_{\rho^\prime} (\Pi\gee \times M)
    \end{equation*}
    where each \(\eta^a \in C^\omega_{\rho^\prime}(\Pi\gee, \Complex)\) is a coordinate function with 
    \begin{equation*}
        |\eta^a|^\prime = (1, (0, \dots, 0)) \in \ZInteger \times \ZInteger^{\times m}. 
    \end{equation*}
    This \(Q\) becomes a homological vector field on \(\Pi\gee \times M\). 
\end{example}

\section{Volume forms on a \(\rho\)-manifold}
For a while, let \(M\) be a \(G\)-graded \(\rho\)-manifold. 
\begin{definition}
    Let \(M\) be a \(\rho\)-manifold. 
    The \textit{\(\rho\)-Berezinian bundle on \(M\)} is the line bundle 
    \(\rhoBer(M)\) on \(M\) characterized by 
    a local frame \(D(x)\) of degree 0 
    on a coordinate neighbourhood \(x=(x^a)_a\), 
    and the transition rule 
    \begin{equation*}
        D(y) = D(x) \rhoBer \left(\frac{\partial y^a}{\partial x^b}\right)_{a,b} 
    \end{equation*}
    between coordinate neighbourhoods \(y=(y^a)_a\) and \(x=(x^b)_b\). 
\end{definition}
Since the Jacobian matrix \(\left(\frac{\partial y^a}{\partial x^b}\right)_{a,b}\) between neighbourhoods 
clearly satisfies the cocycle condition, 
by \thref{proposition_rhoBer}, a vector bundle \(\rhoBer(M)\) is well-defined. 
\begin{definition}
    A \(\rho\)-manifold \(M\) is said to be \textit{orientable} 
    if there is a global section \(\vol \in \Gamma(M, \rhoBer(M))\) 
    whose restriction to each neighbourhood \(U_x\) with a coordinate \(x=(x^a)_a\) 
    is of a form \(\restr{\vol}{U_x} = D(x) s(x)\) 
    with \(s(x) \allowbreak \in (C_\rho^\infty (U_x))_0\) invertible. 
    \par In this case, we call this section \(\vol\) a \textit{\(\rho\)-Berezin volume form on \(M\)}
        or simply a \textit{volume form on \(M\)}. 
    We denote by \(\mathrm{Vol}(M)\) the set of all \(\rho\)-Berezin volume forms on \(M\). 
\end{definition}
\begin{definition}\label{definition_equivalent_volume_forms}
    Let \(M\) be a \(\rho\)-manifold and 
    \(\vol_1, \vol_2 \in \Gamma(M, \rhoBer(M))\) be two \(\rho\)-Berezin volume forms. 
    \(\vol_1\) and \(\vol_2\) are said to be \textit{equivalent} 
    if \(\vol_2 = \vol_1 \cdot \exp h\) for some \(h \in \rhofunc(M)_0\).  
\end{definition}
\begin{remark}\label{remark_equivalent_volume_forms}
    Let us consider the equivalence of volume forms in the non-graded case. 
    The \(\rho\)-Berezinian bundle is just the determinant line bundle \(\det M\) on \(M\). 
    A volume form on \(M\) is a non-vanishing section on \(\det M\), 
        which makes \(\det M\) trivial. 
    \par In the real category, since any positive function is 
    the exponential of some function, two volume forms are equivalent 
    if and only if they are identical up to a multiple by a positive function. 
    This equivalence relation divide \(\mathrm{Vol}(M)\) into two equivalence classes. 
    \par In the complex category, the situation is a bit more complicated. 
    For example, on a punctured complex plane \(M = \Complex^\times\), 
    we have two volume forms \(\vol_1 = 1 dz\) and \(\vol_2 = z dz\) 
    which are not equivalent. 
    Since the action of \(\func(M)^\times\) by multiplication 
        on \(\mathrm{Vol}(M)\) is transitive and free, 
        if we pick some volume form, 
        then \(\mathrm{Vol}(M)\) is identified with \(\func(M)^\times\). 
    Recall that the exponential sheaf sequence 
    \begin{equation*}
        0 \longrightarrow \ZInteger 
        \overset{2\pi\sqrt{-1}}{\longrightarrow} \func 
        \overset{\exp}{\longrightarrow} \func^\times 
        \longrightarrow 0
    \end{equation*}
    yields a long exact sequence 
    \begin{equation*}
        0 \to H^0 (M, \ZInteger) \to H^0 (M, \func) \to H^0 (M, \func^\times) 
        \overset{\delta}{\to} H^1 (M, \ZInteger) \to \cdots
        .
    \end{equation*}
    The identification \(H^0 (M, \func) = \func(M)\) 
        and \(H^0 (M, \func^\times) = \func^\times(M)\) gives 
    \begin{align*}
        \func(M)^\times / \sim 
        &{}= \func(M)^\times / \Image (\exp \colon H^0 (M, \func) \to H^0 (M, \func^\times)) \\
        &{}= \func(M)^\times / \Kernel \delta
    \end{align*}
    where \(\sim\) means the equivalence relation in \thref{definition_equivalent_volume_forms}. 
    The connecting homomorphism \(\delta\) 
    satisfies
    \begin{equation*}
        (\delta f)(\gamma) = \frac{1}{2\pi\sqrt{-1}} \int_{f \circ \gamma} \frac{dz}{z}
    \end{equation*}
    for a loop \(\gamma \in H_1 (M, \ZInteger)\). 
    Therefore, the equivalence relation \(\sim\) between volume forms means 
        they have ``the same winding number''. 
\end{remark}

\begin{example}
    If \(M\) is a point or an open subset of \(\Korper^n\), 
        then \(M\) has a trivial \(\rho\)-Berezin volume form 
        \(\vol = D(x) \cdot 1\). 
\end{example}
\begin{example}\label{example_volume_of_shifted_tangent_bundle}
    Let us consider the shifted tangent bundle \(\Pi TM\) (\thref{example_shifted_tangent_bundle}) 
        of \(M\) with coordinates \(x=(x^a)_a\). 
    To deal with the Jacobian matrix, we write the even coordinates 
        as \(x^{\even} \defeq (x^a)_{|x^a| \text{ : even}}\)
        and the odd coordinates 
        as \(x^{\odd} \defeq (x^a)_{|x^a| \text{ : odd}}\). 
    The \(\rho^\prime\)-Berezinian of the Jacobian matrix is written in block matrices as 
    \begin{align*}
        &\phantom{{}={}} \rho^\prime\Ber J_{(x,dx),(y,dy)} \\
        &{}= \rho^\prime\Ber \left( \begin{array}{cccc}
            \big(\ppx[y^{\even}]{x^{\even}}\big) & 0 & 
                \big(\ppx[y^{\even}]{x^{\odd}}\big) & 0 \\[1ex]
            \big(\ppx[dy^{\odd}]{x^{\even}}\big) & \big(\ppx[dy^{\odd}]{dx^{\odd}}\big) & 
                \big(\ppx[dy^{\odd}]{x^{\odd}}\big) & \big(\ppx[dy^{\odd}]{dx^{\even}}\big) \\[1ex]
            \big(\ppx[y^{\odd}]{x^{\even}}\big) & 0 & 
                \big(\ppx[y^{\odd}]{x^{\odd}}\big) & 0 \\[1ex]
            \big(\ppx[dy^{\even}]{x^{\even}}\big) & \big(\ppx[dy^{\even}]{dx^{\odd}}\big) & 
                \big(\ppx[dy^{\even}]{x^{\odd}}\big) & \big(\ppx[dy^{\even}]{dx^{\even}}\big) 
        \end{array} \right) \\
        &{}= \rho^\prime\Ber \left( \begin{array}{cc}
            \big(\ppx[y^{\even}]{x^{\even}}\big) & \big(\ppx[y^{\even}]{x^{\odd}}\big) \\
            \big(\ppx[y^{\odd}]{x^{\even}}\big) & \big(\ppx[y^{\odd}]{x^{\even}}\big) 
        \end{array} \right)
        \rho^\prime\Ber \left( \begin{array}{cc}
            \big(\ppx[dy^{\odd}]{dx^{\odd}}\big) & \big(\ppx[dy^{\odd}]{dx^{\even}}\big) \\
            \big(\ppx[dy^{\even}]{dx^{\odd}}\big) & \big(\ppx[dy^{\even}]{dx^{\even}}\big) 
        \end{array} \right) \\
        &{}= \rho^\prime\Ber \left( \begin{array}{cc}
            \big(\ppx[y^{\even}]{x^{\even}}\big) & \big(\ppx[y^{\even}]{x^{\odd}}\big) \\
            \big(\ppx[y^{\odd}]{x^{\even}}\big) & \big(\ppx[y^{\odd}]{x^{\even}}\big) 
        \end{array} \right)
        \rho^\prime\Ber \left( \begin{array}{cc}
            \big(\ppx[y^{\even}]{x^{\even}}\big) & \big(\ppx[y^{\even}]{x^{\odd}}\big) \\
            \big(\ppx[y^{\odd}]{x^{\even}}\big) & \big(\ppx[y^{\odd}]{x^{\even}}\big) 
        \end{array} \right)^{-1} \\
        &{}= 1. 
    \end{align*}
    Here we used \thref{proposition_rhoBer} (iv). 
    Hence, \(\Pi TM\) has a \(\rho^\prime\)-Berezin volume form \(\vol = D(x,dx) \cdot 1\). 
\end{example}
\begin{example}\label{example_volume_of_cotangent_bundle}
    For the cotangent bundle \(T^\ast M\) (\thref{example_cotangent_bundle}), we have
    \begin{align*}
        &\phantom{{}={}} \rhoBer J_{(x,p),(y,q)} \\
        &{}= \rhoBer \left( \begin{array}{cccc}
            \big(\ppx[y^{\even}]{x^{\even}}\big) & 0 & 
                \big(\ppx[y^{\even}]{x^{\odd}}\big) & 0 \\[1ex]
            \big(\ppx[q_{\even}]{x^{\even}}\big) & \big(\ppx[q_{\even}]{p_{\even}}\big) & 
                \big(\ppx[q_{\even}]{x^{\odd}}\big) & \big(\ppx[q_{\even}]{p_{\odd}}\big) \\[1ex]
            \big(\ppx[y^{\odd}]{x^{\even}}\big) & 0 & 
                \big(\ppx[y^{\odd}]{x^{\odd}}\big) & 0 \\[1ex]
            \big(\ppx[q_{\odd}]{x^{\even}}\big) & \big(\ppx[q_{\odd}]{p_{\even}}\big) & 
                \big(\ppx[q_{\odd}]{x^{\odd}}\big) & \big(\ppx[q_{\odd}]{p_{\odd}}\big) 
        \end{array} \right) \\
        &{}= \rhoBer \left( \begin{array}{cc}
            \big(\ppx[y^{\even}]{x^{\even}}\big) & \big(\ppx[y^{\even}]{x^{\odd}}\big) \\
            \big(\ppx[y^{\odd}]{x^{\even}}\big) & \big(\ppx[y^{\odd}]{x^{\even}}\big) 
        \end{array} \right)
        \rhoBer \left( \begin{array}{cc}
            \big(\ppx[q_{\even}]{p_{\even}}\big) & \big(\ppx[q_{\even}]{p_{\odd}}\big) \\
            \big(\ppx[q_{\odd}]{p_{\even}}\big) & \big(\ppx[q_{\odd}]{p_{\odd}}\big)  
        \end{array} \right) \\
        &{}= \rhoBer \left( \begin{array}{cc}
            \big(\ppx[y^{\even}]{x^{\even}}\big) & \big(\ppx[y^{\even}]{x^{\odd}}\big) \\
            \big(\ppx[y^{\odd}]{x^{\even}}\big) & \big(\ppx[y^{\odd}]{x^{\even}}\big) 
        \end{array} \right)
        \rhoBer \transpose{\left( \begin{array}{cc}
            \big(\ppx[x^{\even}]{y^{\even}}\big) & \big(\ppx[x^{\even}]{y^{\odd}}\big) \\
            \big(\ppx[x^{\odd}]{y^{\even}}\big) & \big(\ppx[x^{\odd}]{y^{\even}}\big) 
        \end{array} \right)} \\
        &{}= 1. 
    \end{align*}
    Here we used \thref{proposition_rhoBer} (iii) and (iv). 
    Hence, \(T^\ast M\) has a \(\rho\)-Berezin volume form \(\vol = D(x, p) \cdot 1\). 
\end{example}
\begin{example}
    For the tangent bundle \(TM\) (\thref{example_tangent_bundle}), we have 
    \begin{align*}
        &\phantom{{}={}} \rhoBer J_{(x,p),(y,q)} \\
        &{}= \rhoBer \left( \begin{array}{cccc}
            \big(\ppx[y^{\even}]{x^{\even}}\big) & 0 & 
                \big(\ppx[y^{\even}]{x^{\odd}}\big) & 0 \\[1ex]
            \big(\ppx[(\partial_y^{\even})^\ast]{x^{\even}}\big) & \big(\ppx[(\partial_y^{\even})^\ast]{(\partial_x^{\even})^\ast}\big) & 
                \big(\ppx[(\partial_y^{\even})^\ast]{x^{\odd}}\big) & \big(\ppx[(\partial_y^{\even})^\ast]{(\partial_x^{\odd})^\ast}\big) \\[1ex]
            \big(\ppx[y^{\odd}]{x^{\even}}\big) & 0 & 
                \big(\ppx[y^{\odd}]{x^{\odd}}\big) & 0 \\[1ex]
            \big(\ppx[(\partial_y^{\odd})^\ast]{x^{\even}}\big) & \big(\ppx[(\partial_y^{\odd})^\ast]{(\partial_x^{\even})^\ast}\big) & 
                \big(\ppx[(\partial_y^{\odd})^\ast]{x^{\odd}}\big) & \big(\ppx[(\partial_y^{\odd})^\ast]{(\partial_x^{\odd})^\ast}\big) 
        \end{array} \right) \\
        &{}= \rhoBer \left( \begin{array}{cc}
            \big(\ppx[y^{\even}]{x^{\even}}\big) & \big(\ppx[y^{\even}]{x^{\odd}}\big) \\
            \big(\ppx[y^{\odd}]{x^{\even}}\big) & \big(\ppx[y^{\odd}]{x^{\even}}\big) 
        \end{array} \right)
        \rhoBer \left( \begin{array}{cc}
            \big(\ppx[(\partial_y^{\even})^\ast]{(\partial_x^{\even})^\ast}\big) & \big(\ppx[(\partial_y^{\even})^\ast]{(\partial_x^{\odd})^\ast}\big) \\
            \big(\ppx[(\partial_y^{\odd})^\ast]{(\partial_x^{\even})^\ast}\big) & \big(\ppx[(\partial_y^{\odd})^\ast]{(\partial_x^{\odd})^\ast}\big)  
        \end{array} \right) \\
        &{}= \rhoBer \left( \begin{array}{cc}
            \big(\ppx[y^{\even}]{x^{\even}}\big) & \big(\ppx[y^{\even}]{x^{\odd}}\big) \\
            \big(\ppx[y^{\odd}]{x^{\even}}\big) & \big(\ppx[y^{\odd}]{x^{\even}}\big) 
        \end{array} \right)^2 \\
        &{}= (\rhoBer J_{xy})^2 .
    \end{align*}
    Therefore, the tangent bundle \(TM\) has a \(\rho\)-Berezin volume form 
        if the base \(\rho\)-manifold \(M\) has a \(\rho\)-Berezin volume form \(\vol = D(x)s(x)\). 
    In this case, \(\widetilde{\vol} \defeq D(x, (\partial_x)^\ast) s(x)^2\) becomes 
        a \(\rho\)-Berezin volume form on \(TM\). 
\end{example}
\begin{example}\label{example_volume_of_shifted_cotangent_bundle}
    For the shifted cotangent bundle \(\Pi T^\ast M\) (\thref{example_shifted_cotangent_bundle}), we have 
    \begin{align*}
        &\phantom{{}={}} \rho^\prime\Ber J_{(x,x^\ast),(y,y^\ast)} \\
        &{}= \rho^\prime\Ber \left( \begin{array}{cccc}
            \big(\ppx[y^{\even}]{x^{\even}}\big) & 0 & 
                \big(\ppx[y^{\even}]{x^{\odd}}\big) & 0 \\[1ex]
            \big(\ppx[y^\ast_{\odd}]{x^{\even}}\big) & \big(\ppx[y^\ast_{\odd}]{x^\ast_{\odd}}\big) & 
                \big(\ppx[y^\ast_{\odd}]{x^{\odd}}\big) & \big(\ppx[y^\ast_{\odd}]{x^\ast_{\even}}\big) \\[1ex]
            \big(\ppx[y^{\odd}]{x^{\even}}\big) & 0 & 
                \big(\ppx[y^{\odd}]{x^{\odd}}\big) & 0 \\[1ex]
            \big(\ppx[y^\ast_{\even}]{x^{\even}}\big) & \big(\ppx[y^\ast_{\even}]{x^\ast_{\odd}}\big) & 
                \big(\ppx[y^\ast_{\even}]{x^{\odd}}\big) & \big(\ppx[y^\ast_{\even}]{x^\ast_{\even}}\big) 
        \end{array} \right) \\
        &{}= \rho^\prime\Ber \left( \begin{array}{cc}
            \big(\ppx[y^{\even}]{x^{\even}}\big) & \big(\ppx[y^{\even}]{x^{\odd}}\big) \\
            \big(\ppx[y^{\odd}]{x^{\even}}\big) & \big(\ppx[y^{\odd}]{x^{\even}}\big) 
        \end{array} \right)
        \rho^\prime\Ber \left( \begin{array}{cc}
            \big(\ppx[y^\ast_{\odd}]{x^\ast_{\odd}}\big) & \big(\ppx[y^\ast_{\odd}]{x^\ast_{\even}}\big) \\
            \big(\ppx[y^\ast_{\even}]{x^\ast_{\odd}}\big) & \big(\ppx[y^\ast_{\even}]{x^\ast_{\even}}\big) 
        \end{array} \right) \\
        &{}= \rho^\prime\Ber \left( \begin{array}{cc}
            \big(\ppx[y^{\even}]{x^{\even}}\big) & \big(\ppx[y^{\even}]{x^{\odd}}\big) \\
            \big(\ppx[y^{\odd}]{x^{\even}}\big) & \big(\ppx[y^{\odd}]{x^{\even}}\big) 
        \end{array} \right)^2 \\
        &{}= (\rhoBer J_{xy})^2 .
    \end{align*}
    Therefore, the shifted cotangent bundle \(\Pi T^\ast M\) has a \(\rho\)-Berezin volume form 
        given by \(\widetilde{\vol} \defeq D(x, x^\ast) s(x)^2\)
        if the base \(\rho\)-manifold \(M\) has a \(\rho\)-Berezin volume form \(\vol = D(x)s(x)\). 
\end{example}
\begin{example}
    Consider the degree-\(i\) shifted cotangent bundle \([-i]T^\ast M\) (\thref{example_i_shifted_cotangent_bundle}). 
    If \(i\) is even, then the parity of each coordinate \(x_a^\ast\) is the same with that of \(x_a\). 
    Hence, the \(\rho\)-Berezinian of the Jacobian matrix is always 1  
        similarly to \thref{example_volume_of_cotangent_bundle}.  
        Thus \([-i]T^\ast M\) has a \(\rho\)-Berezin volume form 
        \(\vol = D((x^a, x_a^\ast)_a) \cdot 1\). 
    \par If \(i\) is odd, then the parity of each coordinate \(x_a^\ast\) is opposite to that of \(x_a\).  
    Hence, the \(\rho\)-Berezinian of the Jacobian matrix is 
        the square of the \(\rho\)-Berezinian of the Jacobian matrix
        on the base \(\rho\)-manifold 
        similarly to \thref{example_volume_of_shifted_cotangent_bundle}. 
    Consequently, \([-i]T^\ast M\) has a \(\rho\)-Berezin volume form 
        \(\widetilde{\vol} = D(x, x^\ast) s(x)^2\) 
        if \(M\) itself has a \(\rho\)-Berezin volume form \(\vol = D(x) s(x)\). 
\end{example}

\par By deforming the coordinates infinitesimally, 
we consider Lie derivative of a section of \(\rho\)-Berezinian bundle. 
\par Let \(X\) be a vector field on a \(\rho\)-manifold \(M\). 
Infinitesimal deformation of a coordinate \((x^a)_a\) on a neighbourhood \(U_x\) 
is regarded as 
\begin{equation*}
    x^a \mapsto x^a + \varepsilon X^a \eqdef y^a
\end{equation*}
where \(X = \sum_a X^a \ppx{x^a}\) is the local description of \(X\) on \(U_x\), 
and \(\varepsilon\) is an infinitesimal parameter, 
i.e., an indeterminate of degree \(-\abs{X}\) with \(\varepsilon^2 = 0\). 
The Jacobian matrix of this deformation becomes 
\begin{align*}
    \left(\ppx{x^b}(x^a + \varepsilon X^a)\right)_{a,b}
    &{}= \left( \delta_{a,b} + \rho(-\abs{x_b},\abs{\varepsilon}) \varepsilon 
        \frac{\partial X^a}{\partial x^b} \right)_{a,b} \\
    &{}= 1 + \varepsilon \cdot \left(\rho(-\abs{x_a}-\abs{x_b}, \abs{\varepsilon}) 
        \frac{\partial X^a}{\partial x^b}\right)_{a,b}.
\end{align*}
From \thref{lemma_rhoBerexp_exprhotr}, the deformation 
of the local frame \(D(x)\) becomes 
\begin{align*}
    D(y) &{}= D(x) \rhoBer \left(\ppx{x^b}(x^a + \varepsilon X^a)\right)_{a,b} \\
    &{}= D(x) \rhoBer \left( 1 + 
        \varepsilon \cdot \left(\rho(-\abs{x_a}-\abs{x_b}, \abs{\varepsilon}) 
        \frac{\partial X^a}{\partial x^b}\right)_{a,b}\right) \\
    &{}= D(x) \left(1 + \rhotr \left( \varepsilon \cdot 
        \left(\rho(-\abs{x_a}-\abs{x_b}, \abs{\varepsilon}) 
        \frac{\partial X^a}{\partial x^b}\right)_{a,b} \right)\right) \\
    &{}= D(x) \left(1 + \varepsilon \sum_{a} \rho(\abs{x_a}, \abs{x_a} + \abs{X})
        \frac{\partial X^a}{\partial x^a} \right).
\end{align*}
On the other hand, for every local function \(s(x) \in C_\rho^\infty (U_x)\), 
we have 
\begin{equation*}
    s(y) = s(x) + \varepsilon X^a \frac{\partial s}{\partial x^a} 
\end{equation*}
from \thref{lemma_infinitesimal_taylor_expansion}. 
Thus, 
\begin{align*}
    D(y) s(y) 
    &{}= D(x) \left(1 + \varepsilon \sum_{a} \rho(\abs{x_a}, \abs{x_a} + \abs{X})
        \frac{\partial X^a}{\partial x^a} \right)
        \left( s(x) + \varepsilon X^a \frac{\partial s}{\partial x^a} \right) \\
    &{}= D(x)s(x) + \varepsilon D(x) \sum_{a} \rho(\abs{x^a}, \abs{x^a} + \abs{X})
        \ppx{x^a} (X^a s). 
\end{align*}
Therefore, the \textit{Lie derivative along \(X\)} 
of a section of \(\rhoBer(M)\) is described locally by 
\begin{equation} \label{formula_L_X_D_s}
    L_X (D(x)s(x)) = D(x) \sum_{a} 
        \rho(\abs{x^a}, \abs{x^a} + \abs{X}) \ppx{x^a} (X^a s)
\end{equation}
for a coordinate \(x=(x^a)_a\) on \(U_x\) 
and a local function \(s(x) \in C_\rho^\infty (U_x)\). 

The Lie derivative of a \(\rho\)-Berezin volume form on a \(\rho\)-manifold 
is again a section of \(\rho\)-Berezinian bundle. 
The divergence of a \(\rho\)-Berezin volume form is defined as follows. 
\begin{definition}
    Let \(\vol\) be a \(\rho\)-Berezin volume form on \(M\). 
    For a vector field \(X \in \Vect_\rho(M)\), 
    define the function \(\Diverg_{\vol} X\)
    called the \textit{divergence of \(X\) with respect to \(\vol\)} by 
    \begin{equation*}
        \vol \cdot (\Diverg_{\vol} X) = L_X (\vol). 
    \end{equation*}
    In a coordinate \(x=(x^a)_a\), it is written as 
    \begin{equation*}
        \Diverg_{\vol} X = \sum_a \rho(\abs{x^a}, \abs{x^a} + \abs{X}) 
            s^{-1} \ppx{x^a} (X^a s)
    \end{equation*}
    where \(\vol \eqdef D(x)s(x)\). 
\end{definition}

\begin{proposition}\label{proposition_properties_of_divergence}
    For all \(X, Y \in \Vect_\rho(M)\), \(f, g \in C_\rho^\infty(M)\) with \(\abs{g} = 0\), 
    and a \(\rho\)-Berezin volume form \(\vol\) on \(M\), 
    the following properties hold: 
    \begin{itemize}
        \item[(i)] \(\Diverg_{\vol} (fX) = f \Diverg_{\vol} X 
            + \rho(\abs{f}, \abs{X}) X(f)\), 
        \item[(ii)] \(\Diverg_{\vol \cdot \exp(g)} X = \Diverg_{\vol} X + X(g)\), 
        \item[(iii)] \(\Diverg_{\vol} [X,Y]_\rho = X(\Diverg_{\vol} Y) 
            - \rho(\abs{X}, \abs{Y}) Y(\Diverg_{\vol} X)\). 
    \end{itemize}
\end{proposition}
\begin{proof}
    By a direct computation. 
\end{proof}

The definition of the divergence and its properties above 
coincide with the ones in the case of supermanifolds \cite{Bruce2017}.

\section{The modular class of a \(\rho\)-manifold}
For a \(\rho\)-Q-manifold \((M, Q)\), 
the condition \(Q^2 = 0\) implies that 
the subalgebra 
\begin{equation*}
    \rhofunc(M)_Q \defeq \bigoplus_{k \in \ZInteger} \rhofunc(M)_{k \abs{Q}}
    \subset \rhofunc(M) 
\end{equation*}
has a structure of a cochain complex by an action of \(Q\). 
We write the cohomology group of this cochain complex \((\rhofunc(M)_Q, Q)\) 
as \(H_Q^\bullet (M)\). 

\begin{lemma}
    For a \(\rho\)-Q-manifold \((M, Q)\) with a \(\rho\)-Berezin volume form \(\vol\), 
    \begin{equation*}
        Q (\Diverg_{\vol} Q) = 0. 
    \end{equation*}
\end{lemma}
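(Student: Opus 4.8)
The plan is to reduce the statement to the vanishing of the square of the Lie derivative \(\mathcal{L}_Q\) acting on the volume form \(\vol\). Recall that the divergence is characterized by \(\mathcal{L}_Q \vol = (\Diverg_{\vol} Q)\,\vol\), and that \(\Diverg_{\vol} Q\) is homogeneous of the same degree \(\abs{Q}\) as \(Q\), so it is \(\rho\)-odd in the sense that \(\rho(\abs{Q}, \abs{Q}) = -1\). The strategy is therefore: (i) show \(\mathcal{L}_Q^2 \vol = 0\); (ii) expand \(\mathcal{L}_Q^2 \vol\) by the Leibniz rule to isolate \(Q(\Diverg_{\vol} Q)\); and (iii) discard the resulting quadratic term using \(\rho\)-oddness.

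First I would establish \(\mathcal{L}_Q^2 = 0\) on densities. Because \(\mathcal{L}\) is a morphism of \(\rho\)-Lie algebras, \([\mathcal{L}_Q, \mathcal{L}_Q]_\rho = \mathcal{L}_{[Q,Q]_\rho}\). Since \(\rho(\abs{Q},\abs{Q}) = -1\) we have \([Q,Q]_\rho = 2Q^2 = 0\), hence \(\mathcal{L}_{[Q,Q]_\rho} = 0\); on the other hand \([\mathcal{L}_Q,\mathcal{L}_Q]_\rho = 2\mathcal{L}_Q^2\), giving \(\mathcal{L}_Q^2 = 0\) (we work in characteristic zero). Next I would compute \(\mathcal{L}_Q^2\vol\) directly from the definition. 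Writing \(f \defeq \Diverg_{\vol} Q\), the Leibniz rule for the degree-\(\abs{Q}\) derivation \(\mathcal{L}_Q\) gives
\begin{equation*}
    \mathcal{L}_Q^2 \vol = \mathcal{L}_Q(f\,\vol) = Q(f)\,\vol + \rho(\abs{Q},\abs{Q})\, f\, \mathcal{L}_Q\vol = Q(\Diverg_{\vol} Q)\,\vol - (\Diverg_{\vol} Q)^2\,\vol.
\end{equation*}
Since \(\Diverg_{\vol}Q\) is \(\rho\)-odd, \(\rho\)-commutativity forces \((\Diverg_{\vol}Q)^2 = -(\Diverg_{\vol}Q)^2\), so the quadratic term vanishes. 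Combining with \(\mathcal{L}_Q^2\vol = 0\) and the fact that \(\vol\) is a nowhere-vanishing generator of the rank-one module of \(\rho\)-Berezin densities yields \(Q(\Diverg_{\vol}Q) = 0\).

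The main obstacle is purely one of bookkeeping the commutation factors \(\rho\): one must verify that \(\mathcal{L}\) really is a morphism of \(\rho\)-Lie algebras with the correct signs, and that the Leibniz rule for \(\mathcal{L}_Q\) on \(f\,\vol\) carries exactly the factor \(\rho(\abs{Q},\abs{Q})\). An equivalent route that packages these signs more economically is to prove the cocycle identity \(\Diverg_{\vol}([X,Y]_\rho) = X(\Diverg_{\vol}Y) - \rho(\abs{X},\abs{Y})\,Y(\Diverg_{\vol}X)\), in which the quadratic terms cancel automatically by \(\rho\)-commutativity of functions; specializing to \(X = Y = Q\) then gives \(\Diverg_{\vol}([Q,Q]_\rho) = 2\,Q(\Diverg_{\vol}Q)\), and the left-hand side vanishes since \([Q,Q]_\rho = 2Q^2 = 0\).
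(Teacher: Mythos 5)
Your proposal is correct and follows essentially the same route as the paper: apply \(L_Q \circ L_Q = 0\) to \(\vol\), expand by the Leibniz rule to get \((\Diverg_{\vol}Q)^2\,\vol + Q(\Diverg_{\vol}Q)\,\vol\), and discard the quadratic term because \(\Diverg_{\vol}Q\) is \(\rho\)-odd. The paper merely carries out the Leibniz expansion in local coordinates \(\vol = D(x)s(x)\) rather than intrinsically, and takes \(L_Q^2 = 0\) as given rather than deriving it from the \(\rho\)-Lie algebra morphism property.
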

\begin{proof}
    Write \(\vol = D(x)s(x)\) locally. 
    Since \(L_Q \circ L_Q = 0\), we have  
    \begin{align*}
        0 &{}= L_Q L_Q (\vol) = L_Q (D(x) s(x) \Diverg_{\vol} Q) \\
        &{}= \rho (|x^a|, \abs{Q} + |x^a|) D(x) \ppx{x^a} (Q^a s(x) \Diverg_{\vol} Q) \\
        &{}= (D(x) s(x)) (\Diverg_{\vol} Q)^2 + D(x) s(x) Q(\Diverg_{\vol} Q) \\
        &{}= \vol \cdot Q (\Diverg_{\vol} Q). 
    \end{align*}
\end{proof}

\begin{definition}
    The \textit{modular class} of a \(\rho\)-Q-manifold \((M, Q)\) 
    with a \(\rho\)-Berezin volume form \(\vol\) 
    is the cohomology class of the divergence of \(Q\), i.e. 
    \begin{equation*}
        \Modular_Q (M; \vol) \defeq [\Diverg_{\vol} Q] \in H_Q^1(M). 
    \end{equation*}
\end{definition}
\begin{lemma}
    Suppose that \((M, Q)\) is an orientable \(\rho\)-Q-manifold. 
    The modular class of \((M, Q)\) does not depend on 
    equivalent \(\rho\)-Berezin volume forms. 
    Moreover, in the real category, it holds also for 
    \(\rho\)-Berezin volume forms which are not equivalent. 
\end{lemma}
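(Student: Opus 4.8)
The plan is to reduce the whole statement to understanding how the divergence $\Diverg_{\vol} Q$ transforms when the $\rho$-Berezin volume form is rescaled. Since $M$ is orientable, any second $\rho$-Berezin volume form $\vol'$ can be written globally as $\vol' = g\,\vol$ for a unique invertible even (degree $0$) function $g \in \rhofunc(M)_0$, and the problem then splits into a local computation with $g$ followed by a cohomological argument in the complex $(\rhofunc(M)_Q, Q)$.

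First I would compute $\Diverg_{\vol'} Q$ in terms of $\Diverg_{\vol} Q$. Using that $L_Q$ is a $\rho$-derivation of degree $\abs{Q}$ and that $g$ has degree $0$ (so the $\rho$-sign $\rho(\abs{Q}, 0) = 1$ produces no correction), the Leibniz rule gives
\begin{equation*}
    L_Q(\vol') = L_Q(g\,\vol) = Q(g)\,\vol + g\,L_Q(\vol) = \bigl(g^{-1}Q(g) + \Diverg_{\vol} Q\bigr)\,\vol'.
\end{equation*}
Hence $\Diverg_{\vol'} Q = \Diverg_{\vol} Q + g^{-1} Q(g)$. This identity is the technical heart of the argument; the only care required is the $\rho$-sign bookkeeping, which collapses because $g$ is even, exactly in the manner of the local computation in the preceding lemma.

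For the first assertion I would use that two equivalent volume forms differ by a factor of exponential type, $g = e^{f}$ with $f \in \rhofunc(M)_0$. Then the chain rule for the even function $f$ gives $Q(e^{f}) = Q(f)\,e^{f}$, so that $g^{-1}Q(g) = Q(f)$ and therefore
\begin{equation*}
    \Diverg_{\vol'} Q = \Diverg_{\vol} Q + Q(f).
\end{equation*}
Since $f \in \rhofunc(M)_Q^{0}$ and $Q(f)$ is precisely its coboundary, the two divergences represent the same class in $H_Q^1(M)$, so $\Modular_Q(M; \vol') = \Modular_Q(M; \vol)$.

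For the ``moreover'' I would argue that, in the real category, orientability forces \emph{every} admissible ratio $g$ to be of exponential type, so that there are in fact no inequivalent choices among orientation-compatible forms. Indeed, such a pair has ratio $g$ whose body is a strictly positive smooth function; writing $g = g_0(1+n)$ with $g_0 > 0$ its body and $n$ nilpotent, the function $f = \log g = \log g_0 + \log(1+n)$ is a globally defined even function (the second summand being a finite series), whence $g = e^{f}$ and the previous paragraph applies verbatim. The main obstacle is exactly this global existence of $\log g$: it is here that the positivity furnished by orientability over $\Real$ is indispensable, and it is precisely what can fail in, say, the holomorphic category, where a nowhere-vanishing $g$ may carry a topological obstruction to admitting a global logarithm — which is why the unconditional independence is claimed only in the real case.
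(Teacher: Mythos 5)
Your overall strategy coincides with the paper's: derive the transformation law for the divergence under a rescaling of the volume form, observe that for \emph{equivalent} forms the rescaling factor is $\exp h$ so the two divergences differ by the coboundary $Q(h)$, and in the real category reduce the general case to this one by taking a logarithm of the ratio, using positivity of its body. The first two steps are correct; your identity $\Diverg_{g\vol}Q=\Diverg_{\vol}Q+g^{-1}Q(g)$ for invertible even $g$ is exactly the specialization the paper makes of its proposition on properties of the divergence, and the sign bookkeeping does collapse because $g$ has degree $0$.

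The gap is in the ``moreover'' step. You assert that orientability forces the ratio $g$ of \emph{any} two real $\rho$-Berezin volume forms to have strictly positive body, hence to admit a global logarithm. That is not true: an orientable manifold carries volume forms inducing opposite orientations, and for such a pair the body $g_0$ is everywhere negative (invertibility only forces $g_0$ to be nowhere zero, hence of constant sign on a connected $M$). Your argument therefore establishes independence only within a fixed orientation class, which is strictly weaker than the lemma's claim that the modular class is independent of all, possibly non-equivalent, volume forms. The paper closes exactly this case by writing $\vol_2=\vol_1\cdot\bigl(\pm\exp(\log\abs{f})\bigr)$ and noting that the constant sign contributes nothing to the divergence, so that $\Diverg_{\vol_2}Q-\Diverg_{\vol_1}Q$ is still the exact term coming from $\log\abs{f}$. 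The fix in your language is one line: if $g_0<0$, apply your rescaling formula with $g=-\exp(\log\abs{g})$ and observe that $g^{-1}Q(g)=Q(\log\abs{g})$ is again a coboundary. With that amendment your proof agrees with the paper's.
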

\begin{proof}
    Suppose that \(\vol_1\) and \(\vol_2\) are equivalent 
        \(\rho\)-Berezin volume forms on \(M\). 
    There is some function \(h \in \rhofunc(M)_0\) such that 
        \(\vol_2 = \vol_1 \cdot \exp h\). 
    From \thref{proposition_properties_of_divergence} (ii), 
        it holds that \(\Diverg_{\vol_2} Q = \Diverg_{\vol_1} Q + Q(h)\). 
    \par In the real category, 
        for any pair of volume forms \(\vol_1\) and \(\vol_2\), 
        there is some invertible function 
        \(f = \sum_{\mathbf{w}} f_{\mathbf{w}} x^{\mathbf{w}} \in \rhofunc(M)_0\) 
        with \(\vol_2 = \vol_1 \cdot f\). 
    Let \(f_0 \in \func(M)\) be the term in \(f\) 
        which has no \(G\)-graded indeterminates. 
    The invertibility of \(f\) implies that \(f_0\) is always positive 
        or always negative. 
    Hence, we have \(\vol_2 = \vol_1 \cdot (\pm \exp (\log \abs{f}))\) and 
    \begin{equation*}
        \Diverg_{\vol_2} Q = \Diverg_{\vol_1} Q \pm Q(\log \abs{f}). 
    \end{equation*}
\end{proof}
\begin{remark}
    In the real category, since the modular class does not depend on 
    \(\rho\)-Berezin volume forms, 
    we also write \(\Modular_Q (M) \defeq \Modular_Q (M; \vol)\). 
\end{remark}
%
%

\begin{example}
    Consider the de Rham complex of a \(\rho\)-manifold \(M\) 
        with a coordinate \(x = (x^a)_a\). 
    By \thref{example_volume_of_shifted_tangent_bundle}, a \(\rho^\prime\)-manifold \(\Pi TM\) 
        has a \(\rho\)-Berezin volume form \(\vol\)
        which is locally \(D(x,dx) \cdot 1\). 
    It follows that 
    \begin{align*}
        &\phantom{{}=} \Diverg_{\vol} d \\
        &{}= \sum_{a = 1}^{n+m} \rho^\prime (|x^a|, |x^a| + |d|) \ppx{x^a}(dx^a)
        + \sum_{a = 1}^{n+m} \rho^\prime (|dx^a|, |dx^a| + |d|) \ppx{dx^a}(0) \\
        &{} = 0. 
    \end{align*}
    Therefore, \(\Modular_d (\Pi TM; \vol) = 0\). 
\end{example}
\begin{example}
    Consider the de Rham complex \((\Pi T \Complex^\times, Q = dz\ppx{z})\) 
        of \(\Complex^\times\). 
    Here we regard \(\Complex^\times\) as a non-graded holomorphic manifold. 
    We have a \(\rho\)-Berezin volume form \(\vol_1 \defeq D(z, dz) \cdot 1\). 
    We also have another volume form \(\vol_2 \defeq D(z, dz) \cdot z\). 
    We see that \(\Diverg_{\vol_1} Q = \ppx{z}(dz) = 0\) and 
    \(\Diverg_{\vol_2} Q = \frac{1}{z} \ppx{z} (dz \cdot z) = \frac{1}{z} dz\). 
    Since there is no holomorphic function \(f\) on \(\Complex^\times\) 
        which satisfies \(\ppx{z}(f) = \frac{1}{z}\), 
        we have \(\Modular_Q (M; \vol_2) \neq 0 = \Modular_Q (M; \vol_1)\). 
    Indeed, the two volume forms \(\vol_1\) and \(\vol_2\) are not equivalent. 
\end{example}
\begin{example}
    Let \((M, Q)\) be a \(\rho\)-Q-manifold 
        with a \(\rho\)-Berezin volume form \(\vol\). 
    Consider a \(\rho\)-manifold \([-i]T^\ast M\) \ \((i \in G)\) 
        with its homological vector field 
        \(\widetilde{Q} \defeq \dbrack{f_Q, -}\) 
        defined in \thref{example_Q_structure_on_i_shifted_cotangent_bundle}. 
    Denote by \(\widetilde{\vol}\) 
        the \(\rho\)-Berezin volume form on \([-i]T^\ast M\) induced by \(\vol\) 
        with a local description 
        \(\widetilde{\vol} = \widetilde{D}(x, x^\ast) \widetilde{s}(x, x^\ast)\). 
    Since \(\widetilde{s} = 1\) if \(i\) is even and 
        \(\widetilde{s} = s^2\) if \(i\) is odd, 
        its logarithm is written as 
    \begin{equation*}
        \log \widetilde{s} = (1 - \rho(i, i)) \log s. 
    \end{equation*}
    Hence, 
    \begin{equation*}
        \widetilde{Q}(\log \widetilde{s})
        = \widetilde{Q}((1 - \rho(i, i)) \log s)
        = (1 - \rho(i, i)) Q(\log s). 
    \end{equation*}
    The non-\(\widetilde{Q}\)-exact term of the divergence of \(\widetilde{Q}\) becomes
    \begin{align*}
        &{\phantom{{}=}} \sum_{a=1}^{n+m} \rho(|x^a|, |x^a| + |Q|) \ppx[Q^a]{x^a}
            - \sum_{a,b=1}^{n+m} \rho(|x_a^\ast|, |x_a^\ast| + |Q|) 
            \ppx{x_a^\ast}\left(\ppx[Q^b]{x^a}x_b^\ast\right) \\
        &{}= \sum_{a=1}^{n+m} \rho(|x^a|, |x^a| + |Q|) \ppx[Q^a]{x^a}
            - \sum_{a=1}^{n+m} \rho(|x_a^\ast|, |x_a^\ast| + |Q|) 
            \rho(-|x_a^\ast|,|Q|) \ppx[Q^a]{x^a} \\
        &{}= (1-\rho(i, i)) \sum_{a=1}^{n+m} \rho(|x^a|, |x^a|+|Q|) \ppx[Q^a]{x^a}. 
    \end{align*}
    Therefore, we obtain 
    \begin{equation*}
        \Diverg_{\widetilde{\vol}} \widetilde{Q} 
        = (1-\rho(i, i)) \Diverg_{\vol} Q
    \end{equation*}
    and
    \begin{equation*}
        \Modular_{\widetilde{Q}} ([-i]T^\ast M; \widetilde{\vol}) 
        = (1-\rho(i, i)) \Modular_Q (M, \vol). 
    \end{equation*}
\end{example}

\begin{example}
    We continue the example of noncommutative tori (\thref{example_Q_structure_on_noncommutative_tori}). 
    For the BRST differential \(Q\) on the functional algebra 
    \(A_\Theta\) on a noncommutative torus, 
    the divergence of \(Q\) with respect to 
    the trivial \(\rho\)-Berezin volume form \(\vol\) on a point \(M=\{\ast\}\) is 
    \begin{align*}
        \Diverg_{\vol} Q 
        &{}= -\sum_{a=1}^m \rho^\prime (|u^a|^\prime, |u^a|^\prime + |Q|^\prime) \ppx{u^a} (2 \pi \sqrt{-1} \eta^a u^a) \\
        &{}= -\sum_{a=1}^m \rho(|u^a|, |u^a|) 2 \pi \sqrt{-1} \eta^a \\
        &{}= -2 \pi \sqrt{-1}\sum_{a=1}^m \eta^a. 
    \end{align*}
    Hence, we obtain 
    \begin{equation*}
        \Modular_Q (\Pi\gee \times M; \vol) 
        = -2 \pi \sqrt{-1}\sum_{a=1}^m [\eta^a] \in H_Q^1(\Pi\gee \times M). 
    \end{equation*}
\end{example}

%

%
\par \ 
\par \textsc{Graduate School of Mathematical Sciences, the University of Tokyo, 3-8-1 Komaba, Meguro-ku, Tokyo, 153-8914, Japan}
\par \textit{E-mail address}: \texttt{harako@ms.u-tokyo.ac.jp}

\begin{thebibliography}{99}
    \bibitem{Bartocci1991}
        C. Bartocci, U. Bruzzo, and D. H.-Ruip\'{e}rez. 
        \textit{The Geometry of Supermanifolds}. 
        Mathematics and Its Applications. 
        Springer, Dordrecht,
        1991.
    \bibitem{Bongaarts1994}
        P. J. M. Bongaarts and H. G. J. Pijls. 
        ``Almost commutative algebra and differential calculus on the quantum hyperplane''.
        In: \textit{J. Math. Phys.}
        2.35 (1994), pp. 959-970.
    \bibitem{Bruce2017} 
        A. J. Bruce. 
        ``Modular classes of Q-manifolds: A review and some applications''. 
        In: \textit{Arch. Math. (Brno)} 53.4 (May 2017), 
        pp. 203-219. 
    \bibitem{Bruce2020}
        A. J. Bruce. 
        ``Almost commutative $Q$-algebras and derived brackets''. 
        In: \textit{J. Noncommut. Geom.} 14.2 (2020), 
        pp. 681-707. 
    \bibitem{Ciupala2005}
        C. Ciupal\u{a}. 
        ``Differential calculus on almost commutative algebras and 
        applications to the quantum hyperplane''. 
        In: \textit{Arch. Math. (Brno)} 41.4 (2005), 
        pp. 359-377. 
    \bibitem{Covolo2016}
        T. Covolo and J.-Ph. Michel. 
        ``Determinants over graded-commutative algebras, a categorical viewpoint''. 
        In: \textit{Enseign. Math.} 62.6 (2016), 
        pp. 361-420. 
    \bibitem{KobayashiNagamachi1984} 
        Y. Kobayashi and S. Nagamachi. 
        ``Lie groups and Lie algebras with generalized supersymmetric parameters''. 
        In: \textit{J. Math. Phys.} 25.12 (1984), 
        pp. 3367-3374. 
    \bibitem{Kontsevich1999}
        M. Kontsevich. 
        ``Rozansky-Witten Invariants via Formal Geometry''. 
        In: \textit{Compos. Math.} 115.1 (Jan. 1999), 
        pp. 115-127. 
    \bibitem{Kotov2007}
        A. Kotov and T. Strobl. 
        ``Characteristic classes associated to $Q$-bundles''.
        In: \textit{Int. J. Geom. Methods Mod. Phys.} 12 (Dec. 2007). 
    \bibitem{Lyakhovich2009}
        S. L. Lyakhovich, E. A. Mosman, and A. A. Sharapov. 
        ``Characteristic classes of $Q$-manifolds: Classification and applications''. 
        In: \textit{J. Geom. Phys.} 60 (June 2009), 
        pp. 729-759. 
    \bibitem{Mnev2019}
        P. Mnev. 
        \textit{Quantum Field Theory: Batalin-Vilkovisky Formalism and Its Applications}. 
        University Lecture Series. 2019. 
    \bibitem{Ritcher2001}
        D. A. Richter. 
        ``Colored brackets and 2-manifolds''. 
        In: \textit{J. Geom. Phys.} 1.39 (2001), 
        pp. 1-8. 
    \bibitem{Rieffel1990}
        M. A. Rieffel. 
        ``Noncommutative tori -- a case study of noncommutative differentiable manifolds''. 
        In: \textit{Contemp. Math.} 105 (1990). 
    \bibitem{Rittenberg1978}
        V. Rittenberg and D. Wyler. 
        ``Generalized superalgebras''. 
        In: \textit{Nuclear Phys. B} 3.139 (1978), 
        pp. 189-202. 
    \bibitem{Scheunert1979}
        M. Scheunert. 
        ``Generalized Lie algebras''. 
        In: \textit{J. Math. Phys.} 20.4 (1979), 
        pp. 712-720. 
    \bibitem{Voronov2019}
        Th. Th. Voronov. 
        ``Graded geometry, $Q$-manifolds, and microformal geometry''. 
        In: \textit{Fortschr. Phys.} 67 (May 2019). 
\end{thebibliography}
\end{document}